\theoremstyle{plain}
\newtheorem{theorem}{Theorem}[section]
\newtheorem{proposition}[theorem]{Proposition}
\newtheorem{corollary}[theorem]{Corollary}
\newtheorem{lemma}[theorem]{Lemma}
\theoremstyle{definition}
\newtheorem{example}[theorem]{Example}
\theoremstyle{remark}
\DeclareMathOperator{\reg}{reg}
\DeclareMathOperator{\ureg}{ureg}
\DeclareMathOperator{\Sym}{Sym}
\DeclareMathOperator{\id}{id}
\begin{document}

\title[Green's relations and unit-regularity for transformation semigroups]{Green's relations and unit-regularity for semigroup of transformations whose characters are bijective}

\author[Mosarof Sarkar]{\bfseries Mosarof Sarkar}
\address{Department of Mathematics, Central University of South Bihar, Gaya, Bihar, India}
\email{mosarofsarkar@cusb.ac.in}
\author[Shubh N. Singh]{\bfseries Shubh N. Singh}
\address{Department of Mathematics, Central University of South Bihar, Gaya, Bihar, India}
\email{shubh@cub.ac.in}

\subjclass[2010]{20M17, 20M20, 20B30.}
\keywords{Transformation semigroups, Symmetric groups, Regular semigroups, Unit-regular semigroups, Green's relations, Set partitions.}


\begin{abstract}
Let $X$ be a nonempty set and $\mathcal{P}=\{X_i\colon i\in I\}$ be a partition of $X$. Denote by $T(X, \mathcal{P})$ the semigroup of all transformations of $X$ that preserve $\mathcal{P}$. In this paper, we study the semigroup $\mathcal{B}(X,\mathcal{P})$ of all transformations $f\in T(X, \mathcal{P})$ such that $\chi^{(f)}\in \Sym(I)$, where $\Sym(I)$ is the symmetric group on $I$ and $\chi^{(f)}\colon I \to I$ is the character (map) of $f$ defined by $i\chi^{(f)}=j$ whenever $X_if\subseteq X_j$. We describe unit-regular elements in $\mathcal{B}(X,\mathcal{P})$, and determine when $\mathcal{B}(X,\mathcal{P})$ is a unit-regular semigroup. We alternatively prove that $\mathcal{B}(X,\mathcal{P})$ is a regular semigroup. We describe Green's relations on $\mathcal{B}(X,\mathcal{P})$, and prove that $\mathcal{D} = \mathcal{J}$ on $\mathcal{B}(X,\mathcal{P})$ when $\mathcal{P}$ is finite. We also give a necessary and sufficient condition for $\mathcal{D} = \mathcal{J}$ on $\mathcal{B}(X,\mathcal{P})$. We end the paper with a conjecture.
\end{abstract}
\maketitle

\section{Introduction} 
Throughout this paper, let $X$ be a nonempty set, let $\mathcal{P} = \{X_i \colon i\in I\}$ be a partition of $X$, and let $E$ be the equivalence relation on $X$ corresponding to the partition $\mathcal{P}$. Denote by $T(X)$ (resp. $\Sym(X)$) the full transformation semigroup (resp. symmetric group) on $X$. The semigroup $T(X)$ and its subsemigroups play a vital role in semigroup theory, since every semigroup can be embedded in some $T(Z)$ (cf. \cite[Theorem 1.1.2]{howie95}). This famous result is analogous to Cayley's theorem for groups, which states that every group can be embedded in some $\Sym(X)$.

\vspace{0.05cm}
We say that any transformation $f\colon X \to X$ \emph{preserves} the partition $\mathcal{P}$ if for every $X_i \in \mathcal{P}$, there exists $X_j \in \mathcal{P}$ such that $X_i f \subseteq X_j$. In 1994, Pei \cite{pei-sf94} first studied the subsemigroup $T(X,\mathcal{P})$ of $T(X)$ consisting of all transformations that preserve the partition $\mathcal{P}$. Using symbols,
\begin{align*}
	T(X,\mathcal{P})=&\{f\in T(X)\colon (\forall X_i\in \mathcal{P})\;(\exists X_j\in \mathcal{P})\; X_if\subseteq X_j\}\\
	=&\{f\in T(X)\colon \forall x,y\in X, (x,y)\in E \Rightarrow (xf, yf)\in E\}.
\end{align*}
In that paper, Pei \cite[Theorem 2.8]{pei-sf94} proved that $T(X,\mathcal{P})$ is exactly the semigroup of all continuous selfmaps on $X$ with respect to the topology having $\mathcal{P}$ as a basis. The semigroup $T(X,\mathcal{P})$ and its subsemigroups have been extensively studied by a number of authors (see \cite{araujo-cps15, araujo-sf09, fernandes-bmmss12, fernandes-ca14, pei-sf05, pei-ca05, pei-ca07, pei-ac11, sarkar-ca21, sarkar-jaa22} for some references).


\vspace{0.05cm}
In what follows, the letter $I$ denotes the index set of the partition $\mathcal{P}$. Let $f\in T(X, \mathcal{P})$. The \emph{character (map)} of $f$ is the selfmap $\chi^{(f)}\colon I \to I$ defined by $i\chi^{(f)} = j$ whenever $X_i f \subseteq X_j$. For finite $X$, the character $\chi^{(f)}$ with the notation $\bar{f}$ has been studied by Ara\'{u}jo et al. \cite{araujo-cps15}, Dolinka and East \cite{east-ca16}, and Dolinka et al. \cite{east-bams16}. For arbitrary $X$, the character $\chi^{(f)}$ was first considered by Purisang and Rakbud \cite[p. 220]{rakbud-ckms16}. The character $\chi^{(f)}$ of $f$ has also been received attention (see \cite{rakbud-kmj18,sarkar-ca21,sarkar-sf22, sarkar-jaa22}).


\vspace{0.05cm}
Using the notion of character $\chi^{(f)}$, Purisang and Rakbud \cite{rakbud-ckms16} introduced the following subsemigroup of $T(X, \mathcal{P})$:
\[\mathcal{B}(X,\mathcal{P})=\{f\in T(X, \mathcal{P})\colon \chi^{(f)}\in \Sym(I)\}.\]
In that paper, the authors \cite[Theorem 3.5(1)]{rakbud-ckms16} proved that $\mathcal{B}(X,\mathcal{P})$ is a regular semigroup. The semigroup $\mathcal{B}(X,\mathcal{P})$ generalizes both $T(X)$ and $\Sym(X)$ in the sense that $\mathcal{B}(X,\mathcal{P})=T(X)$ if $|\mathcal{P}|= 1$, and $\mathcal{B}(X,\mathcal{P}) = \Sym(X)$ if $\mathcal{P}$ consists of singletons.
When $\mathcal{P}$ is finite, the authors \cite[Corollary 3.5]{sarkar-ca21} proved that $\mathcal{B}(X,\mathcal{P}) = \Sigma(X,\mathcal{P}) = T_{E^*}(X)$, where
\begin{align*}
	\Sigma(X,\mathcal{P}) &= \{f\in T(X, \mathcal{P})\colon Xf \cap X_i \neq \varnothing \; \forall X_i \in \mathcal{P}\},\\
	T_{E^*}(X) &= \{f\in T(X)\colon \forall x,y\in X,\; (x,y)\in E \iff (xf,yf)\in E\}.
\end{align*}
For arbitrary $X$, both the semigroups $\Sigma(X,\mathcal{P})$ and $T_{E^*}(X)$ have been studied (see \cite{araujo-cps15, araujo-sf09, sarkar-ca21, sarkar-sf22, sarkar-jaa22} and \cite{deng-bims16,deng-sf10,deng-sf12,sun-jaa13}, respectively). In particular, the authors \cite{araujo-cps15, araujo-sf09} considered the semigroup $\Sigma(X,\mathcal{P})$ for finite $X$ to calculate the rank of the finite semigroup $T(X,\mathcal{P})$.


\vspace{0.05cm}
This paper is motivated by various results on $T(X,\mathcal{P})$ and its two subsemigroups $\Sigma(X,\mathcal{P})$ and $T_{E^*}(X)$. The rest of the paper is organized as follows. In the next section, we define concepts, introduce notation, and recall some results needed in this paper. In Section 3, we give an alternative proof of Theorem 3.5(1) in \cite{rakbud-ckms16}, which ascertain that the semigroup $\mathcal{B}(X, \mathcal{P})$ is regular. We next describe unit-regular elements in $\mathcal{B}(X, \mathcal{P})$, and determine when $\mathcal{B}(X, \mathcal{P})$ is a unit-regular semigroup. In Section 4, we describe Green's relations on $\mathcal{B}(X, \mathcal{P})$. We also give a necessary and sufficient condition for $\mathcal{D} = \mathcal{J}$ on $\mathcal{B}(X, \mathcal{P})$.


\section{Preliminaries and Notation}   

Let $X$ be a nonempty set. The cardinality of $X$ is denoted by $|X|$, and the identity map on $X$ is denoted by $\id_X$. For any sets $A$ and $B$,  let $A\setminus B$ denote the set $\{x\in A \colon x\notin B\}$. A \emph{partition} of $X$ is a collection of pairwise disjoint nonempty subsets, called \emph{blocks}, whose union is $X$. A \emph{trivial} partition is a partition that has only singleton blocks or a single block. A partition is \emph{uniform} if all its blocks have the same cardinality. A \emph{transversal} of an equivalence relation $\rho$ on $X$ is a subset of $X$ that contains exactly one element from each $\rho$-class. The set of all positive integers is denoted by $\mathbb{N}$. For $n\in \mathbb{N}$, let $[n]$ denote the set $\{1,\ldots, n\}$.


\vspace{0.05cm}

We compose mappings from left to right and denote their composition by juxtaposition. Let $f\colon X \to Y$ be a mapping. We denote by $xf$ the image of an element $x\in X$ under $f$. For any $A\subseteq X$  (resp. $B\subseteq Y$), we denote by $Af$ (resp. $Bf^{-1}$) the set $\{af\colon a\in A\}$ (resp. $\{x\in X\colon  xf \in B\}$). Furthermore, if $B=\{b\}$, then we write $bf^{-1}$ instead of $\{b\} f^{-1}$. Let $D(f)$ denote the set $Y\setminus Xf$ and $\textnormal{d}(f)=|D(f)|$. The \emph{kernel} of $f$, denoted by $\ker(f)$, is an equivalence on $X$ defined by $\ker(f) = \{(a,b)\in X \times X \colon af = bf\}$. The symbol $\pi(f)$ denotes the partition of $X$ induced by $\ker(f)$, and the symbol $T_f$ denote any transversal of $\ker(f)$. Note that $|X\setminus T_f|$ is independent of the choice of transversal of $\ker(f)$ (cf. \cite[p. 1356]{higgins-rsm98}). Set $\textnormal{c}(f) = |X\setminus T_f|$. Let $g\colon X \to X$ be a mapping. For any nonempty subset $A$ of the domain of $g$, the \emph{restriction} of $g$ to $A$ is the mapping $g_{\upharpoonright A}\colon A\to X$ defined by $x(g_{\upharpoonright A})=xg$ for all $x\in A$. Moreover if $B$ is a subset of the codomain of $g$ such that $Ag \subseteq B$, then we reserve the same notation $g_{\upharpoonright A}$ for the mapping from $A$ to $B$ that assigns $xg$ to each $x\in A$.


\vspace{0.05cm}
Let $S$ be a semigroup and $a\in S$. We say that $a$ is \textit{regular} in $S$ if there exists $b\in S$ such that $aba = a$. The set of all regular elements in $S$ is denoted by $\reg(S)$. If $\reg(S)=S$, then $S$ is a \emph{regular semigroup}. In addition, let $S$ contains the identity. Then the set of all unit elements of $S$ is denoted by $U(S)$. We say that $a$ is \textit{unit-regular} in $S$ if there exists $u\in U(S)$ such that $aua = a$. The set of all unit-regular elements in $S$ is denoted by $\ureg(S)$. If $\ureg(S) = S$, then $S$ is a \emph{unit-regular semigroup}. Note that $U(T(X))=\Sym(X)$, and denote by $S(X,\mathcal{P})$ the set of all unit elements of $T(X,\mathcal{P})$. It is evident that $S(X,\mathcal{P}) = T(X,\mathcal{P}) \cap \Sym(X)$ and $U(\mathcal{B}(X,\mathcal{P})) = S(X,\mathcal{P})$.

\vspace{0.05cm}

Let $S$ be a semigroup and $a, b \in S$. The Green's relations $\mathcal{L},\mathcal{R},\mathcal{H},\mathcal{D}$, and $\mathcal{J}$ on $S$ defined as follows: $(a,b)\in \mathcal{L}$ if $S^1a=S^1b$, $(a,b)\in \mathcal{R}$ if $aS^1=bS^1$, $\mathcal{H} = \mathcal{L} \cap \mathcal{R}$, $\mathcal{D} = \mathcal{L} \circ \mathcal{R}$, and $(a,b)\in \mathcal{J}$ if $S^1aS^1=S^1bS^1$, where $S^1$ is the semigroup $S$ with an identity adjoined (if necessary). If $\mathcal{K}$ is any Green’s relation on $S$, then the equivalence class of $a$ with respect to $\mathcal{K}$ is denoted by $K_a$. Since $\mathcal{L},\mathcal{R}$, and $\mathcal{J}$ are defined in terms of ideals, which are partially ordered by inclusion, we have induced partial orders on the sets of the equivalence classes of $\mathcal{L},\mathcal{R}$, and $\mathcal{J}$ \cite[p. 47, (2.1.3)]{howie95}: $L_a\leq L_b$ if $S^1a \subseteq S^1b$, $R_a\leq R_b$ if $aS^1\subseteq bS^1$, and $J_a\leq J_b$ if $S^1aS^1\subseteq S^1bS^1$.

\vspace{0.07cm}

We refer the reader to \cite{howie95} for any undefined concepts, notation, and results of semigroup theory. We end this section by stating a list of preliminary results.

\begin{lemma}\cite[Lemma 3.1]{sarkar-jaa22}\label{transversal}
Let $f\colon X\to Y$ and $g\colon Y\to X$ be mappings. If $fgf = f$, then $X(fg)$ is a transversal of the equivalence relation $\ker(f)$.
\end{lemma}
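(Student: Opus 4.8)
The plan is to verify directly the two defining properties of a transversal of $\ker(f)$: that $X(fg)$ meets every $\ker(f)$-class, and that it meets each such class in exactly one point. Throughout I would use only the hypothesis $fgf=f$, applied in both ``directions,'' together with the left-to-right composition convention, so that $x(fg)=(xf)g$ and $X(fg)=\{(xf)g\colon x\in X\}\subseteq X$.

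First I would show that $X(fg)$ is a subset of $X$ that intersects every $\ker(f)$-class. Given an arbitrary $x\in X$, consider the element $x(fg)\in X(fg)$. Applying $f$ and using $fgf=f$ gives $\bigl(x(fg)\bigr)f=x(fgf)=xf$, so $\bigl(x,x(fg)\bigr)\in\ker(f)$. Hence the $\ker(f)$-class of $x$ contains the element $x(fg)$ of $X(fg)$; as $x$ was arbitrary, every class is met.

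Next I would show that no two distinct elements of $X(fg)$ are $\ker(f)$-related, so that each class is met at most once. Suppose $y_1,y_2\in X(fg)$ satisfy $(y_1,y_2)\in\ker(f)$, and write $y_1=(x_1f)g$ and $y_2=(x_2f)g$ for some $x_1,x_2\in X$. From $y_1f=y_2f$ we obtain $x_1(fgf)=x_2(fgf)$, whence $x_1f=x_2f$ by $fgf=f$; applying $g$ then yields $y_1=(x_1f)g=(x_2f)g=y_2$. Combining the two steps shows $X(fg)$ contains exactly one element from each $\ker(f)$-class, that is, it is a transversal.

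I do not expect any real obstacle here: the statement follows from a short manipulation of $fgf=f$. The only point requiring care is bookkeeping with the left-to-right composition convention, making sure that $X(fg)=(Xf)g$ and that $f$ is applied on the correct side when invoking the identity $fgf=f$.
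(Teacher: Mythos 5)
Your proof is correct: the computation $\bigl(x(fg)\bigr)f = x(fgf) = xf$ shows that every $\ker(f)$-class meets $X(fg)$, and your cancellation step (from $y_1f=y_2f$ deduce $x_1f=x_2f$, then apply $g$ to get $y_1=y_2$) shows distinct elements of $X(fg)$ lie in distinct classes, which together give exactly the definition of a transversal. Note that the paper itself contains no proof of this statement --- it is imported by citation from \cite[Lemma 3.1]{sarkar-jaa22} --- so your two-step verification fills in what the paper leaves to the reference; it is the natural direct argument, and there is no gap in it.
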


\begin{lemma}\cite[Lemma 2.3]{rakbud-ckms16}\label{rakbud-lemma}
We have $\chi^{(fg)}=\chi^{(f)}\chi^{(g)}$ for all $f,g\in T(X,\mathcal{P})$.
\end{lemma}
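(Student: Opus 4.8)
The plan is to verify the identity pointwise on the index set $I$, using only the defining property of the character together with the transitivity of set inclusion. First I would record the (routine but necessary) fact that $fg\in T(X,\mathcal{P})$, so that $\chi^{(fg)}$ is even defined: if $X_if\subseteq X_j$ and $X_jg\subseteq X_k$, then $X_i(fg)=(X_if)g\subseteq X_jg\subseteq X_k$, so $fg$ preserves $\mathcal{P}$. This same chain of inclusions is essentially the entire computation.

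Next I would fix an arbitrary $i\in I$ and put $j=i\chi^{(f)}$ and $k=j\chi^{(g)}$, so that by definition $X_if\subseteq X_j$ and $X_jg\subseteq X_k$. From the inclusion $X_i(fg)\subseteq X_k$ obtained above, and because the character value at $i$ is by definition the unique index whose block contains $X_i(fg)$, I would conclude $i\chi^{(fg)}=k$. It then remains only to observe that $i(\chi^{(f)}\chi^{(g)})=(i\chi^{(f)})\chi^{(g)}=j\chi^{(g)}=k$, whence $i\chi^{(fg)}=i(\chi^{(f)}\chi^{(g)})$. Since $i$ was arbitrary, the two maps agree on all of $I$, giving $\chi^{(fg)}=\chi^{(f)}\chi^{(g)}$.

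The only point that genuinely needs care, and the nearest thing to an obstacle here, is the well-definedness hidden behind the phrase \emph{the unique index}: because the blocks of $\mathcal{P}$ are pairwise disjoint and $X_i$ (hence $X_i(fg)$) is nonempty, the block containing $X_i(fg)$ is uniquely determined, so the equation $i\chi^{(fg)}=k$ is forced rather than merely permitted. Everything else is a direct unwinding of definitions, so I do not expect any serious difficulty beyond being explicit about this uniqueness.
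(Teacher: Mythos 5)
Your proof is correct: the paper itself offers no proof of this lemma, stating it only as a cited preliminary from \cite[Lemma 2.3]{rakbud-ckms16}, and your argument --- first checking $fg\in T(X,\mathcal{P})$ via the chain $X_i(fg)=(X_if)g\subseteq X_jg\subseteq X_k$, then using that chain together with disjointness of the nonempty blocks to force $i\chi^{(fg)}=k=i(\chi^{(f)}\chi^{(g)})$ --- is precisely the standard direct verification that any proof of this identity amounts to. Nothing is missing; in particular, you were right to isolate the well-definedness of the character (uniqueness of the block containing $X_i(fg)$) as the only point with genuine content beyond unwinding definitions.
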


\begin{lemma}\cite[Lemma 5.2]{sarkar-ca21}\label{fam-func}
Let $\mathcal{P} = \{X_i\colon i\in I\}$ be a partition of $X$ and $f \in T(X)$. Then $f\in T(X,\mathcal{P})$ if and only if there exists a unique family $B(f, I) := \{f_{\upharpoonright_{X_i}}\colon i \in I\}$ such that the codomain of each $f_{\upharpoonright_{X_i}}$ is a block of $\mathcal{P}$.
\end{lemma}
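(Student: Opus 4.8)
The plan is to prove the two implications directly from the definition of $T(X,\mathcal{P})$, treating the existence and the uniqueness of the family $B(f,I)$ separately. For the forward direction, assume $f\in T(X,\mathcal{P})$. By definition, for each $i\in I$ there is some $j\in I$ with $X_if\subseteq X_j$. Using the codomain convention fixed earlier in this section, I would then view the restriction $f_{\upharpoonright_{X_i}}$ as a mapping from $X_i$ into this block $X_j$; ranging over all $i\in I$, these restrictions form a family $B(f,I)=\{f_{\upharpoonright_{X_i}}\colon i\in I\}$ in which the codomain of every member is a block of $\mathcal{P}$. This settles existence.

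The crux is the uniqueness assertion, which amounts to showing that the target block attached to each $i$ is forced. Here I would invoke the nonemptiness of the blocks: since $X_i\neq\varnothing$, its image $X_if$ is nonempty, so if both $X_if\subseteq X_j$ and $X_if\subseteq X_{j'}$ hold, then $X_j\cap X_{j'}\neq\varnothing$, and disjointness of distinct blocks yields $X_j=X_{j'}$. Consequently the codomain of each $f_{\upharpoonright_{X_i}}$ is uniquely determined, and since the domains $X_i$ exhaust $X$ and the rule of $f$ on each $X_i$ is fixed, the whole family $B(f,I)$ is unique.

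For the converse, suppose such a family $B(f,I)$ exists. For each $i\in I$ the codomain of $f_{\upharpoonright_{X_i}}$ is, by hypothesis, a block $X_j$, which says exactly that $X_if\subseteq X_j$. As this holds for every $i\in I$, the map $f$ meets the defining condition of $T(X,\mathcal{P})$. I do not expect a genuine obstacle here; the only delicate point is the bookkeeping around the codomain convention, because the notation $f_{\upharpoonright_{X_i}}$ encodes not merely the assignment rule but also the chosen block as target, and it is exactly this extra datum that the uniqueness claim concerns.
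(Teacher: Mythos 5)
Your proof is correct. Note that the paper itself does not prove this lemma---it is quoted as a preliminary result from \cite[Lemma 5.2]{sarkar-ca21}---but your argument is exactly the natural one: existence of the family follows from the definition of $T(X,\mathcal{P})$ together with the codomain convention for restrictions, and uniqueness follows from the fact that $X_i f \neq \varnothing$ (blocks are nonempty) combined with pairwise disjointness of distinct blocks, which pins down the target block of each $f_{\upharpoonright_{X_i}}$; the converse is an immediate unwinding of the same convention.
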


\begin{theorem}\cite[Theorem 5.8]{sarkar-ca21}\label{nece-SXP}
Let $\mathcal{P} = \{X_i\colon i\in I\}$ be a partition of $X$ and $f\in T(X,\mathcal{P})$. Then $f\in S(X,\mathcal{P})$ if and only if
\begin{enumerate}
	\item[\rm(i)] every mapping of $B(f, I)$ is bijective;
	\item[\rm(ii)] $\chi^{(f)}\in \Sym(I)$.
\end{enumerate}
\end{theorem}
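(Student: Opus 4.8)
The plan is to prove both implications directly from the definition of a unit: $f\in S(X,\mathcal{P})$ precisely when $f$ is a bijection of $X$ whose set-theoretic inverse $f^{-1}$ again lies in $T(X,\mathcal{P})$, so that $ff^{-1}=f^{-1}f=\id_X$ with $f^{-1}\in T(X,\mathcal{P})$. The two main tools will be Lemma \ref{rakbud-lemma}, which makes the assignment $g\mapsto \chi^{(g)}$ multiplicative, and Lemma \ref{fam-func}, which lets me analyse $f$ through its block restrictions $f_{\upharpoonright X_i}\colon X_i \to X_{i\chi^{(f)}}$.

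For the forward implication, suppose $f\in S(X,\mathcal{P})$, so $f$ is a bijection and $f^{-1}\in T(X,\mathcal{P})$. Applying Lemma \ref{rakbud-lemma} to $ff^{-1}=\id_X$ and $f^{-1}f=\id_X$ gives $\chi^{(f)}\chi^{(f^{-1})}=\chi^{(f^{-1})}\chi^{(f)}=\chi^{(\id_X)}=\id_I$, so $\chi^{(f)}$ has a two-sided inverse in the monoid of selfmaps of $I$; this yields (ii). For (i), injectivity of each $f_{\upharpoonright X_i}$ is immediate from injectivity of $f$. For surjectivity onto the target block $X_{i\chi^{(f)}}$, I would take $y\in X_{i\chi^{(f)}}$, write $y=xf$ using surjectivity of $f$, locate the block $X_k$ containing $x$, and observe $k\chi^{(f)}=i\chi^{(f)}$; since $\chi^{(f)}$ is injective by (ii), this forces $k=i$, whence $y\in X_i f$.

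For the converse, assume (i) and (ii). First I would verify that $f$ is a bijection of $X$: distinct blocks $X_i, X_{i'}$ are sent into the distinct (hence disjoint) blocks $X_{i\chi^{(f)}}, X_{i'\chi^{(f)}}$ because $\chi^{(f)}$ is injective, so together with injectivity of each $f_{\upharpoonright X_i}$ the map $f$ is injective; and surjectivity of $\chi^{(f)}$ combined with surjectivity of each restriction gives surjectivity of $f$. It then remains to show $f^{-1}\in T(X,\mathcal{P})$. For a block $X_j$, every $y\in X_j$ has its unique $f$-preimage in the block $X_i$ with $i\chi^{(f)}=j$, i.e. $i=j(\chi^{(f)})^{-1}$; hence $X_j f^{-1}\subseteq X_{j(\chi^{(f)})^{-1}}$ is contained in a single block, so $f^{-1}$ preserves $\mathcal{P}$ and therefore $f\in S(X,\mathcal{P})$.

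I expect the only genuinely delicate step to be the verification, in the converse, that $f^{-1}$ preserves $\mathcal{P}$, and this is exactly where the full strength of (ii) is used. If $\chi^{(f)}$ were merely surjective but not injective, two blocks could be collapsed into a single target block, and then the $f^{-1}$-image of that block would spread across several blocks, destroying membership in $T(X,\mathcal{P})$; thus injectivity of the character is not a redundant hypothesis. Everything else amounts to careful bookkeeping with the family $B(f,I)$ of Lemma \ref{fam-func} and the multiplicativity of characters from Lemma \ref{rakbud-lemma}.
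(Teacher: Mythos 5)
Your proof is correct; note, however, that the paper contains no proof to compare it with, since Theorem \ref{nece-SXP} is recalled as a preliminary from \cite[Theorem 5.8]{sarkar-ca21}, so your argument has to stand on its own --- and it does. Both directions are complete: multiplicativity of characters (Lemma \ref{rakbud-lemma}) applied to $ff^{-1}=f^{-1}f=\id_X$ gives (ii); injectivity of $\chi^{(f)}$ then yields surjectivity of each $f_{\upharpoonright X_i}$ onto its target block; and in the converse you verify both bijectivity of $f$ and the one point that is easy to overlook, namely $X_jf^{-1}\subseteq X_{j(\chi^{(f)})^{-1}}$, so that $f^{-1}$ itself preserves $\mathcal{P}$. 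One choice you made deserves emphasis, because it is exactly where a blind attempt could have gone wrong: you took $S(X,\mathcal{P})$ to be the set of units of $T(X,\mathcal{P})$, i.e.\ bijections $f$ with $f^{-1}\in T(X,\mathcal{P})$, rather than relying on the paper's parenthetical remark that ``evidently'' $S(X,\mathcal{P})=T(X,\mathcal{P})\cap\Sym(X)$. That identification actually fails for general infinite partitions, and the stated equivalence would be false under that reading: take blocks $\{a_n\}$ and $\{b_n,c_n\}$ for $n\in\mathbb{N}$, and the bijection $a_1\mapsto b_1$, $a_2\mapsto c_1$, $a_{n+2}\mapsto a_n$, $b_n\mapsto b_{n+1}$, $c_n\mapsto c_{n+1}$; it lies in $T(X,\mathcal{P})\cap \Sym(X)$ but violates both (i) and (ii), and its inverse sends the block $\{b_1,c_1\}$ onto $\{a_1,a_2\}$, which is not contained in a single block. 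Your forward implication genuinely uses $f^{-1}\in T(X,\mathcal{P})$ (this is what licenses applying Lemma \ref{rakbud-lemma} to $f^{-1}$), so your proof rests on exactly the right hypothesis and nothing weaker.
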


\section{Unit-regularity for $\mathcal{B}(X,\mathcal{P})$} 

In this section, we describe unit-regular elements in $\mathcal{B}(X,\mathcal{P})$ and then give a
necessary and sufficient condition for $\mathcal{B}(X,\mathcal{P})$ to be unit-regular. We begin by giving an alternative proof of the following proposition which was first appeared in \cite[Theorem 3.5(1)]{rakbud-ckms16}.

\begin{proposition}
The semigroup $\mathcal{B}(X,\mathcal{P})$ is regular.
\end{proposition}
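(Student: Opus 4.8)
The plan is to exhibit, for an arbitrary $f \in \mathcal{B}(X,\mathcal{P})$, an explicit $g \in \mathcal{B}(X,\mathcal{P})$ with $fgf = f$, built blockwise from $f$ by inverting it on a transversal of its kernel. By Lemma~\ref{fam-func}, $f$ is determined by the family $B(f,I) = \{f_{\upharpoonright X_i} : i \in I\}$, where each restriction is a map $f_{\upharpoonright X_i}\colon X_i \to X_{i\chi^{(f)}}$ into a block. The starting observation is that since $\chi^{(f)} \in \Sym(I)$, the assignment $i \mapsto i\chi^{(f)}$ is a bijection of $I$; hence for every $j \in I$ there is a unique index, namely $i = j(\chi^{(f)})^{-1}$, with $X_i f \subseteq X_j$. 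This is precisely what will let the companion $g$ be placed into $\mathcal{B}(X,\mathcal{P})$ rather than merely $T(X,\mathcal{P})$.

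To construct $g$, fix $j \in I$ and put $i = j(\chi^{(f)})^{-1}$. I would choose a transversal $T_i$ of $\ker(f_{\upharpoonright X_i})$, so that $f_{\upharpoonright X_i}$ restricts to a bijection of $T_i$ onto the image $X_i f \subseteq X_j$. Then I define $g_{\upharpoonright X_j} \colon X_j \to X_i$ by sending each $y \in X_i f$ to the unique element of $T_i$ that $f$ carries to $y$, and each $y \in X_j \setminus X_i f$ to some fixed element of $X_i$ (this choice being immaterial). Since the family $\{g_{\upharpoonright X_j} : j \in I\}$ has each codomain equal to a block, Lemma~\ref{fam-func} assembles these into a well-defined $g \in T(X,\mathcal{P})$, and by construction $j\chi^{(g)} = j(\chi^{(f)})^{-1}$ for every $j$, so $\chi^{(g)} = (\chi^{(f)})^{-1} \in \Sym(I)$ and therefore $g \in \mathcal{B}(X,\mathcal{P})$.

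It then remains to check $fgf = f$ pointwise. For $x \in X_i$, set $j = i\chi^{(f)}$, so that $xf \in X_i f \subseteq X_j$; by the definition of $g$ on $X_j$, the image $(xf)g$ is the transversal representative $x' \in T_i$ with $x'f = xf$, whence $x(fgf) = x'f = xf$. As $x$ and $i$ were arbitrary, $fgf = f$, so $f$ is regular. Consistency with Lemma~\ref{rakbud-lemma} is automatic, since $\chi^{(fgf)} = \chi^{(f)}\chi^{(g)}\chi^{(f)} = \chi^{(f)}(\chi^{(f)})^{-1}\chi^{(f)} = \chi^{(f)}$.

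The genuinely load-bearing point, and the only place the hypothesis is used, is the bijectivity of $\chi^{(f)}$: it guarantees that every block $X_j$ arises as the target of exactly one source block, so that $g$ can be defined on all of $X$ with an inverse character and thus lands inside $\mathcal{B}(X,\mathcal{P})$. The blockwise inversion on a kernel transversal is the standard regularity construction for $T(X)$, and the verification of $fgf = f$ is routine once the transversals are fixed; the partition-preservation of $g$ together with the identity $\chi^{(g)} = (\chi^{(f)})^{-1}$ are what require care.
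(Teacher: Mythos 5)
Your proposal is correct and follows essentially the same construction as the paper: both build $g$ by inverting $f$ on a transversal of its kernel, send points outside the image to a fixed element of the block $X_{j(\chi^{(f)})^{-1}}$, and use the bijectivity of $\chi^{(f)}$ to conclude $\chi^{(g)} = (\chi^{(f)})^{-1} \in \Sym(I)$, hence $g \in \mathcal{B}(X,\mathcal{P})$, before verifying $fgf = f$ pointwise. The only cosmetic difference is that you assemble $g$ blockwise via Lemma~\ref{fam-func}, whereas the paper defines $g$ globally in one step.
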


\begin{proof}[\textbf{Proof}]
Let $f\in \mathcal{B}(X,\mathcal{P})$, and write $(\chi^{(f)})^{-1}=\alpha$. Fix $x'\in xf^{-1}$ for each $x\in Xf$, and also fix $x_i\in X_i$ for each $i\in I$. Define $g\in T(X)$ as follows: Given $x\in X$, there exists $i\in I$ such that $x\in X_i$; we let
\begin{eqnarray*}
xg=
\begin{cases}
	x'         & \text{if $x\in X_i\cap Xf$;}\\
	x_{i\alpha} & \text{if $x\in X_i\setminus Xf$}.
\end{cases}
\end{eqnarray*}	

It is easy to see that $g\in T(X,\mathcal{P})$ and $\chi^{(g)}=\alpha$, so $g\in \mathcal{B}(X,\mathcal{P})$. To prove $fgf = f$, let $x\in X$ and write $xf = y$. Then $x\in X_i$ for some $i\in I$. Therefore, since $y' \in yf^{-1}$, we obtain $x(fgf)=(xf)gf=(yg)f = y'f=y=xf$, which yields $fgf=f$. Hence $f\in \reg(\mathcal{B}(X,\mathcal{P}))$ as required.

\end{proof}


The following theorem describes unit-regular elements in $\mathcal{B}(X,\mathcal{P})$.

\begin{theorem}\label{ureg-element}
Let $f\in \mathcal{B}(X,\mathcal{P})$. Then $f\in\ureg(\mathcal{B}(X,\mathcal{P}))$ if and only if $\textnormal{c}(f_{\upharpoonright_{X_i}})= \textnormal{d}(f_{\upharpoonright_{X_i}})$ for all $i\in I$.	
\end{theorem}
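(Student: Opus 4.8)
The plan is to localize the problem to each block and to reduce it to an auxiliary ``between two sets'' version of unit-regularity. Write $\alpha = (\chi^{(f)})^{-1}$, so $X_if \subseteq X_{i\alpha^{-1}}$ for each $i$; put $j = i\alpha^{-1}$ and let $h_i = f_{\upharpoonright_{X_i}}\colon X_i \to X_j$ be the member of $B(f,I)$ from Lemma \ref{fam-func}, so that $\textnormal{c}(h_i)$ and $\textnormal{d}(h_i)$ are computed with $X_j$ as codomain. The decisive structural feature is that $\chi^{(f)} \in \Sym(I)$: the assignment $i \mapsto j = i\chi^{(f)}$ is a bijection of $I$, so every image block $X_j$ is hit from a unique source block $X_i$, and for $y \in X_j$ the whole fibre $yf^{-1}$ lies in that single $X_i$. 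This is what decouples the blocks and lets local bijections be assembled into a global unit.

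First I would record the auxiliary fact: for any map $h\colon A \to B$ there is a bijection $v\colon B \to A$ with $hvh = h$ if and only if $\textnormal{c}(h) = \textnormal{d}(h)$. For the ``if'' direction, fix a transversal $T_h$ of $\ker(h)$ and define $v$ on $Ah$ by sending $xh$ to $x$ for each $x \in T_h$; this maps $Ah$ bijectively onto $T_h$ and already forces $hvh = h$, and one extends $v$ to a bijection $B \to A$ precisely because $|B\setminus Ah| = \textnormal{d}(h) = \textnormal{c}(h) = |A \setminus T_h|$. For the ``only if'' direction, Lemma \ref{transversal} gives that $A(hv)$ is a transversal of $\ker(h)$, so $\textnormal{c}(h) = |A \setminus (Ah)v|$; injectivity of $v$ turns this into $|(B \setminus Ah)v| = |B \setminus Ah| = \textnormal{d}(h)$. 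It is important to phrase this as an identity of cardinals so that it survives the infinite case.

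For the forward implication, suppose $fuf = f$ with $u \in U(\mathcal{B}(X,\mathcal{P})) = S(X,\mathcal{P})$. Applying Lemma \ref{rakbud-lemma} to $fuf = f$ and cancelling the permutation $\chi^{(f)}$ gives $\chi^{(u)} = \alpha$, so $X_ju = X_{j\alpha} = X_i$ and, by Theorem \ref{nece-SXP}, the restriction $v_i := u_{\upharpoonright_{X_j}}\colon X_j \to X_i$ is a bijection. Restricting $fuf = f$ to $X_i$ then yields $h_i v_i h_i = h_i$, so the auxiliary fact forces $\textnormal{c}(h_i) = \textnormal{d}(h_i)$ for every $i$.

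For the converse, assume $\textnormal{c}(h_i) = \textnormal{d}(h_i)$ for all $i$. The auxiliary fact supplies, for each $i$, a bijection $v_i\colon X_j \to X_i$ with $h_iv_ih_i = h_i$. Because $j \mapsto i$ is the bijection $\alpha$ of $I$, the maps $v_i$ glue to a single $u$ whose restriction to each block $X_j$ is $v_i\colon X_j \to X_i$; hence $u$ preserves $\mathcal{P}$, has bijective block restrictions, and satisfies $\chi^{(u)} = \alpha \in \Sym(I)$, so $u \in S(X,\mathcal{P}) = U(\mathcal{B}(X,\mathcal{P}))$ by Theorem \ref{nece-SXP}. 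A blockwise check using $h_iv_ih_i = h_i$ then gives $fuf = f$, whence $f \in \ureg(\mathcal{B}(X,\mathcal{P}))$. I expect the only real obstacle to be bookkeeping rather than depth: the mathematical content sits in the cardinal equivalence $hvh = h \Leftrightarrow \textnormal{c}(h) = \textnormal{d}(h)$, and the surrounding care is to verify that bijectivity of $\chi^{(f)}$ genuinely decouples the blocks so the local bijections $v_i$ assemble into a bona fide unit.
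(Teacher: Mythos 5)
Your proposal is correct and takes essentially the same route as the paper's proof: cancellation of $\chi^{(f)}$ via Lemma \ref{rakbud-lemma}, blockwise restriction of $fuf=f$ to the maps of $B(f,I)$, the transversal Lemma \ref{transversal} together with Theorem \ref{nece-SXP} for the forward direction, and a blockwise construction of local bijections glued along $\chi^{(f)}\in\Sym(I)$ into a unit for the converse. The only difference is organizational: you isolate the core as a standalone two-set lemma (a map $h\colon A\to B$ admits a bijection $v\colon B\to A$ with $hvh=h$ if and only if $\textnormal{c}(h)=\textnormal{d}(h)$), whereas the paper proves exactly this fact inline in each direction of its argument.
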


\begin{proof}[\textbf{Proof}]
Suppose that $f\in\ureg(\mathcal{B}(X,\mathcal{P}))$. Then there exists $g\in U(\mathcal{B}(X,\mathcal{P}))$ such that $fgf=f$. This gives $\chi^{(f)}\chi^{(g)}\chi^{(f)}=\chi^{(f)}$ by Lemma \ref{rakbud-lemma}. Therefore $\chi^{(g)}=(\chi^{(f)})^{-1}$, since $\chi^{(f)},\chi^{(g)}\in \Sym(I)$.
Write $\chi^{(f)}=\alpha$.

\vspace{0.5mm}
To prove the desired result, let $i\in I$ and write $i\alpha=j$. Then $j\chi^{(g)}=i$. Therefore, since $fgf = f$, it is easy to see that $f_{\upharpoonright_{X_i}}g_{\upharpoonright_{X_j}}f_{\upharpoonright_{X_i}}= f_{\upharpoonright_{X_i}}$, where $f_{\upharpoonright_{X_i}}\colon X_i\to X_j$ (resp. $g_{\upharpoonright_{X_j}}\colon X_j\to X_i$) is a map of $B(f,I)$ (resp. $B(g,I)$). It follows from Lemma \ref{transversal} that $T_{f_{\upharpoonright_{X_i}}} := X_i(f_{\upharpoonright_{X_i}}g_{\upharpoonright_{X_j}})$ is a transversal of $\ker(f_{\upharpoonright_{X_i}})$. Since $g\in S(X,\mathcal{P})$, it follows from Theorem \ref{nece-SXP} that every map of $B(g,I)$ is bijective. Therefore
\[(X_j\setminus X_if_{\upharpoonright_{X_i}})g_{\upharpoonright_{X_j}}= X_jg_{\upharpoonright_{X_j}}\setminus X_i(f_{\upharpoonright_{X_i}}g_{\upharpoonright_{X_j}}) = X_i\setminus T_{f_{\upharpoonright_{X_i}}},\]
which yields $|X_j\setminus X_if_{\upharpoonright_{X_i}}|=|X_i\setminus T_{f_{\upharpoonright_{X_i}}}|$. Hence $\textnormal{c}(f_{\upharpoonright_{X_i}})=|X_i\setminus T_{f_{\upharpoonright_{X_i}}}|=|X_j\setminus X_if_{\upharpoonright_{X_i}}|=\textnormal{d}(f_{\upharpoonright_{X_i}})$.

\vspace{1mm}	
Conversely, suppose that $\textnormal{c}(f_{\upharpoonright_{X_i}})= \textnormal{d}(f_{\upharpoonright_{X_i}})$ for all $i\in I$. To prove $f\in\ureg(\mathcal{B}(X,\mathcal{P}))$, we shall construct $g\in U(\mathcal{B}(X,\mathcal{P}))$ such that $fgf=f$. For this, let $i\in I$ and write $i(\chi^{(f)})^{-1}=j$. Let $T_{f_{\upharpoonright_{X_j}}}$ be a transversal of $\ker(f_{\upharpoonright_{X_j}})$, where $f_{\upharpoonright_{X_j}}\colon X_j\to X_i$ is a map of $B(f,I)$. It is easy to see that the map $\varphi_i\colon X_jf_{\upharpoonright_{X_j}}\to T_{f_{\upharpoonright_{X_j}}}$ defined by $x\varphi_i=x'$ whenever $x(f_{\upharpoonright_{X_j}})^{-1}\cap T_{f_{\upharpoonright_{X_j}}}=\{x'\}$ is bijective. Since $|X_i\setminus X_jf_{\upharpoonright_{X_j}}| = | X_j\setminus T_{f_{\upharpoonright_{X_j}}}|$ by hypothesis, there exists a bijection $\psi_i \colon X_i\setminus X_jf_{\upharpoonright_{X_j}}\to X_j\setminus T_{f_{\upharpoonright_{X_j}}}$. Define $h_i\colon X_i\to X_j$ by
\begin{eqnarray*}
xh_i=
\begin{cases}
	x\varphi_i  & \text{if $x\in X_jf_{\upharpoonright_{X_j}}$;}\\
	x\psi_i     & \text{if $x\in X_i\setminus X_jf_{\upharpoonright_{X_j}}$}.
\end{cases}
\end{eqnarray*}	
Clearly $h_i$ is bijective. We now prove that the map $g\in T(X)$ defined by $xg=xh_i$ whenever $x\in X_i$ for some $i\in I$, is a unit element of $\mathcal{B}(X,\mathcal{P})$. For this, let $i\in I$ and write $i(\chi^{(f)})^{-1}=j$. Then, since $h_i\colon X_i\to X_j$ is bijective, we get $X_ig=X_ih_i=X_j$. Therefore $g\in T(X,\mathcal{P})$ and $\chi^{(g)}=(\chi^{(f)})^{-1}$. Since $\chi^{(g)}\in \Sym(I)$ and every map $g_{\upharpoonright_{X_i}} = h_i$ of $B(g,I)$ is bijective, it follows from Theorem \ref{nece-SXP} that $g\in U(\mathcal{B}(X,\mathcal{P}))$. Finally, we prove that $fgf=f$. For this, let $x\in X$. Then $x\in X_i$ for some $i\in I$. Write $xf_{\upharpoonright_{X_i}}=y$ and $i\chi^{(f)}=j$. Therefore, since $y'\in y(f_{\upharpoonright_{X_i}})^{-1}\cap T_{f_{\upharpoonright_{X_i}}}$, we obtain $x(fgf)=(xf_{\upharpoonright_{X_i}})g_{\upharpoonright_{X_j}}f_{\upharpoonright_{X_i}}=(yg_{\upharpoonright_{X_j}})f_{\upharpoonright_{X_i}}=y'f_{\upharpoonright_{X_i}}=y=xf_{\upharpoonright_{X_i}}=xf$, which yields $fgf=f$. Hence $f\in\ureg(\mathcal{B}(X,\mathcal{P}))$ as required.
\end{proof}	

It is well-known that $T(X)$ is unit-regular if and only if $X$ is finite (cf. \cite[Proposition 5]{d'alarco-sf80}). In the following theorem, we describe the unit-regularity for $\mathcal{B}(X,\mathcal{P})$.

\begin{theorem}
The semigroup $\mathcal{B}(X,\mathcal{P})$ is unit-regular if and only if
\begin{enumerate}
\item[\rm(i)] $\mathcal{P}$ is a uniform partition;
\item[\rm(ii)] the cardinality of every block of $\mathcal{P}$ is finite.
\end{enumerate}
\end{theorem}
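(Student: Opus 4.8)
The plan is to reduce everything to the characterization of unit-regular elements furnished by Theorem \ref{ureg-element}: the semigroup $\mathcal{B}(X,\mathcal{P})$ is unit-regular precisely when \emph{every} $f\in\mathcal{B}(X,\mathcal{P})$ satisfies $\textnormal{c}(f_{\upharpoonright_{X_i}})=\textnormal{d}(f_{\upharpoonright_{X_i}})$ for all $i\in I$. The governing observation is that for a map $\alpha\colon A\to B$, a transversal $T_\alpha$ of $\ker(\alpha)$ is in bijection with the image $A\alpha$, so $\textnormal{c}(\alpha)=|A\setminus T_\alpha|$ and $\textnormal{d}(\alpha)=|B\setminus A\alpha|$ are both controlled by comparing $|A|$ and $|B|$ against the common rank $|A\alpha|$. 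When $A$ and $B$ are finite and equicardinal, each equals $|A|-|A\alpha|$, forcing $\textnormal{c}(\alpha)=\textnormal{d}(\alpha)$; when they are not, this equality can be broken, and pinpointing exactly how is the crux of the whole argument.

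For sufficiency, I would assume (i) and (ii), so that every block of $\mathcal{P}$ has one fixed finite cardinality. For arbitrary $f\in\mathcal{B}(X,\mathcal{P})$ and $i\in I$, the map $f_{\upharpoonright_{X_i}}\colon X_i\to X_{i\chi^{(f)}}$ of $B(f,I)$ runs between two finite blocks of equal cardinality, whence $\textnormal{c}(f_{\upharpoonright_{X_i}})=|X_i|-|X_if_{\upharpoonright_{X_i}}|=|X_{i\chi^{(f)}}|-|X_if_{\upharpoonright_{X_i}}|=\textnormal{d}(f_{\upharpoonright_{X_i}})$. By Theorem \ref{ureg-element}, $f\in\ureg(\mathcal{B}(X,\mathcal{P}))$, and since $f$ was arbitrary, $\mathcal{B}(X,\mathcal{P})$ is unit-regular.

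For necessity I would argue the contrapositive, exhibiting in each failure case a single $f\in\mathcal{B}(X,\mathcal{P})$ with $\textnormal{c}(f_{\upharpoonright_{X_k}})\neq\textnormal{d}(f_{\upharpoonright_{X_k}})$ for some $k$. First suppose (ii) fails, say $X_k$ is infinite: define $f$ to act as $\id_{X_i}$ on every block with $i\neq k$ and as an injective but non-surjective self-map on $X_k$ (available because $X_k$ is infinite). Then $\chi^{(f)}=\id_I\in\Sym(I)$, so $f\in\mathcal{B}(X,\mathcal{P})$, yet $\textnormal{c}(f_{\upharpoonright_{X_k}})=0\neq\textnormal{d}(f_{\upharpoonright_{X_k}})$. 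Next suppose (ii) holds but (i) fails, so all blocks are finite while $|X_a|<|X_b|$ for some $a,b\in I$: define $f$ to act as $\id_{X_i}$ for $i\notin\{a,b\}$, as an injection $X_a\to X_b$, and as an arbitrary map $X_b\to X_a$. Then $\chi^{(f)}$ is the transposition $(a\ b)\in\Sym(I)$, so $f\in\mathcal{B}(X,\mathcal{P})$, while the injection forces $\textnormal{c}(f_{\upharpoonright_{X_a}})=0$ and $\textnormal{d}(f_{\upharpoonright_{X_a}})=|X_b|-|X_a|>0$. In either case $f\notin\ureg(\mathcal{B}(X,\mathcal{P}))$, so the semigroup is not unit-regular.

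The care required here is bookkeeping rather than depth: for each constructed $f$ I must confirm genuine membership in $\mathcal{B}(X,\mathcal{P})$ by checking $f\in T(X,\mathcal{P})$ via Lemma \ref{fam-func} (the prescribed family $B(f,I)$ has each codomain a block) together with $\chi^{(f)}\in\Sym(I)$, and I must treat the finite-block reduction before addressing (i) so that writing $\textnormal{c}$ and $\textnormal{d}$ as differences of cardinalities is legitimate. The expected main obstacle is being precise that for \emph{infinite} blocks the identity $\textnormal{c}=\textnormal{d}$ genuinely may fail and that an injective non-surjective self-map is exactly the witness; once (ii) is secured, the unequal-size case collapses to routine finite counting.
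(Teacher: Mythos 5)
Your proof is correct and takes essentially the same approach as the paper: both directions reduce to Theorem~\ref{ureg-element}, with sufficiency following from finite equicardinal counting on each block and necessity from constructing a transformation that violates $\textnormal{c}=\textnormal{d}$ on a single bad block. The differences are cosmetic only --- you organize necessity contrapositively (treating infinite blocks first, then unequal finite blocks), use an injective non-surjective self-map where the paper uses a surjective non-injective one, and swap the two unequal blocks via a transposition where the paper uses constant maps together with an arbitrary permutation $\alpha$ satisfying $j\alpha=k$.
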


\begin{proof}[\textbf{Proof}]
Suppose that $\mathcal{B}(X,\mathcal{P})$ is a unit-regular semigroup.
\begin{enumerate}
\item[\rm(i)] Suppose to the contrary that there are distinct $j,k\in I$ such that $|X_j|\neq |X_k|$. Consider the following two possible cases:

\vspace{0.7mm}
\noindent Case 1: Suppose $|X_j|<|X_k|$. Then there exists a mapping $\varphi\colon X_j\to X_k$ that is injection, but not surjection. Therefore $\textnormal{c}(\varphi)=0$ and $\textnormal{d}(\varphi)\geq 1$.

\vspace{0.7mm}
\noindent Case 2: Suppose $|X_j|>|X_k|$. Then there exists a mapping $\varphi\colon X_j\to X_k$ that is surjection, but not injection. Therefore $\textnormal{c}(\varphi)\geq 1$ and $\textnormal{d}(\varphi)=0$.

\vspace{0.5mm}
In either case, there is a mapping $\varphi\colon X_j\to X_k$ such that $\textnormal{c}(\varphi)\neq \textnormal{d}(\varphi)$. We now choose $x_i\in X_i$ for each $i\in I\setminus \{k\}$, and consider $\alpha\in \Sym(I)$ such that $j\alpha=k$. Define $f\in T(X)$ by
\begin{eqnarray*}
xf=
\begin{cases}
x\varphi     & \text{if $x\in X_j$;}\\
x_{i\alpha}  & \text{if $x\in X_i$, where $i\in I\setminus \{j\}$}.
\end{cases}
\end{eqnarray*}
Clearly $f\in T(X,\mathcal{P})$ and $\chi^{(f)}=\alpha$, so $f\in \mathcal{B}(X,\mathcal{P})$. However, since $\textnormal{c}(\varphi)\neq \textnormal{d}(\varphi)$, we see that $\textnormal{c}(f_{\upharpoonright_{X_j}})=\textnormal{c}(\varphi)\neq \textnormal{d}(\varphi)=\textnormal{d}(f_{\upharpoonright_{X_j}})$. Therefore $f\notin \ureg(\mathcal{B}(X,\mathcal{P}))$ by Theorem \ref{ureg-element}, a contradiction. Hence $\mathcal{P}$ is uniform.

\vspace{0.8mm}
\item[\rm(ii)] Suppose to the contrary that $X_i$ is infinite for some $i\in I$. Then there exists a map $\psi \colon X_i\to X_i$ that is surjection, but not injection. Therefore $\textnormal{c}(\psi)\geq 1$ and $\textnormal{d}(\psi)=0$, so $\textnormal{c}(\psi)\neq \textnormal{d}(\psi)$. Define $g\in T(X)$ by
\begin{eqnarray*}
xg=
\begin{cases}
x\psi  & \text{if $x\in X_i$;}\\
x      & \text{otherwise.}
\end{cases}
\end{eqnarray*}
Clearly $g\in T(X,\mathcal{P})$ and
$\chi^{(g)}=\id_I$, so $g\in \mathcal{B}(X,\mathcal{P})$. However, since $\textnormal{c}(\psi)\neq \textnormal{d}(\psi)$, we see that $\textnormal{c}(f_{\upharpoonright_{X_i}})=\textnormal{c}(\psi)\neq \textnormal{d}(\psi)=\textnormal{d}(f_{\upharpoonright_{X_i}})$. Therefore $f\notin \ureg(\mathcal{B}(X,\mathcal{P}))$ by Theorem \ref{ureg-element}, a contradiction. Hence the cardinality of every block of $\mathcal{P}$ is finite.
\end{enumerate}

Conversely, suppose that the given conditions hold. Let $f\in \mathcal{B}(X,\mathcal{P})$ and let $i\in I$. Consider the map $f_{\upharpoonright_{X_i}}\colon X_i\to X_{i\chi^{(f)}}$ of $B(f,I)$. By condition (i), we have $|X_i|=|X_{i\chi^{(f)}}|$. Note from condition (ii) that $X_i$ is finite, so $\textnormal{c}(f_{\upharpoonright_{X_i}})= \textnormal{d}(f_{\upharpoonright_{X_i}})$. Thus $\textnormal{c}(f_{\upharpoonright_{X_i}})=\textnormal{d}(f_{\upharpoonright_{X_i}})$ for all $i\in I$, where $f_{\upharpoonright_{X_i}}\in B(f,I)$. Hence $f\in \ureg(\mathcal{B}(X,\mathcal{P}))$ by Theorem \ref{ureg-element} as required.
\end{proof}	


\section{Green's Relations on $\mathcal{B}(X,\mathcal{P})$} 

In this section, we give a complete description of Green's relations on $\mathcal{B}(X,\mathcal{P})$. We begin with a lemma that is useful in describing the relation $\mathcal{L}$ on $\mathcal{B}(X,\mathcal{P})$.

\begin{lemma}\label{b-l-lemma}
Let $f,g\in \mathcal{B}(X,\mathcal{P})$. Then $L_f\leq L_g$ in $\mathcal{B}(X,\mathcal{P})$ if and only if there exists $\alpha \in \Sym(I)$ such that $X_if\subseteq X_{i\alpha}g$ for all $i\in I$.	
\end{lemma}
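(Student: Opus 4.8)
The plan is to first convert the statement about the quasi-order $L_f \le L_g$ into a factorization. Since $\id_X \in \mathcal{B}(X,\mathcal{P})$ (it preserves $\mathcal{P}$ and has $\chi^{(\id_X)} = \id_I \in \Sym(I)$), the semigroup $\mathcal{B}(X,\mathcal{P})$ is a monoid, so $S^1 = \mathcal{B}(X,\mathcal{P})$ and the defining condition $S^1 f \subseteq S^1 g$ is equivalent to $f \in \mathcal{B}(X,\mathcal{P})\,g$. Thus $L_f \le L_g$ holds if and only if $f = hg$ for some $h \in \mathcal{B}(X,\mathcal{P})$, and I will prove the lemma in this reformulated shape.

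For the forward direction I would assume $f = hg$ with $h \in \mathcal{B}(X,\mathcal{P})$ and set $\alpha = \chi^{(h)}$, which lies in $\Sym(I)$ by the very definition of $\mathcal{B}(X,\mathcal{P})$. For each $i \in I$ one computes $X_i f = X_i(hg) = (X_i h)g$, and since $h$ preserves $\mathcal{P}$ we have $X_i h \subseteq X_{i\chi^{(h)}} = X_{i\alpha}$; hence $X_i f \subseteq X_{i\alpha}g$, exactly as required. (Lemma \ref{rakbud-lemma} additionally records $\chi^{(f)} = \alpha\,\chi^{(g)}$, but the direct image computation is all that is needed here.)

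The converse is where the real work lies: given $\alpha \in \Sym(I)$ with $X_i f \subseteq X_{i\alpha}g$ for all $i$, I must manufacture an $h \in \mathcal{B}(X,\mathcal{P})$ with $hg = f$. The idea is to build $h$ \emph{blockwise}. For each $x \in X$ there is a unique $i$ with $x \in X_i$, and then $xf \in X_i f \subseteq X_{i\alpha}g$, so $xf = yg$ for at least one $y \in X_{i\alpha}$; I select one such $y$ and declare $xh = y$ (this uses a pointwise choice of preimage). By construction $xh \in X_{i\alpha}$ whenever $x \in X_i$, so $X_i h \subseteq X_{i\alpha}$, giving $h \in T(X,\mathcal{P})$ with $\chi^{(h)} = \alpha$; since $\alpha \in \Sym(I)$ this places $h$ in $\mathcal{B}(X,\mathcal{P})$. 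Finally $x(hg) = (xh)g = yg = xf$ for every $x$, so $hg = f$ and therefore $L_f \le L_g$.

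The main obstacle is this converse construction: one must read off from the single permutation $\alpha$ an honest transformation $h$ that simultaneously preserves $\mathcal{P}$ with character exactly $\alpha$ and satisfies $hg = f$. The point that makes it go through is that the hypothesis $X_i f \subseteq X_{i\alpha}g$ guarantees, for each $x \in X_i$, a preimage of $xf$ lying inside the prescribed block $X_{i\alpha}$; choosing such preimages forces $\chi^{(h)} = \alpha$, and it is precisely this that certifies membership in $\mathcal{B}(X,\mathcal{P})$ rather than merely in $T(X,\mathcal{P})$.
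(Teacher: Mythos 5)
Your proof is correct and follows essentially the same route as the paper: the forward direction reads off $\alpha = \chi^{(h)}$ from a factorization $f = hg$, and the converse builds $h$ by choosing, for each $x \in X_i$, a preimage of $xf$ under $g$ inside $X_{i\alpha}$, which forces $\chi^{(h)} = \alpha$ and hence $h \in \mathcal{B}(X,\mathcal{P})$. Your explicit remarks that $\mathcal{B}(X,\mathcal{P})$ is a monoid (so $S^1 = S$) and that Lemma~\ref{rakbud-lemma} is not actually needed in the forward direction are accurate refinements of the paper's presentation, not deviations from it.
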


\begin{proof}[\textbf{Proof}]
Suppose that $L_f\leq L_g$ in $\mathcal{B}(X,\mathcal{P})$. Then there exists $h\in \mathcal{B}(X,\mathcal{P})$ such that $f=hg$. This gives $\chi^{(f)}=\chi^{(h)}\chi^{(g)}$ by Lemma \ref{rakbud-lemma}. Write $\chi^{(h)}=\alpha$. Then, since $h\in \mathcal{B}(X,\mathcal{P})$, we have $\alpha\in \Sym(I)$. To prove the desired result, let $i\in I$. Note that $X_ih\subseteq X_{i\alpha}$, and therefore $X_if=X_i(hg)=(X_ih)g\subseteq X_{i\alpha}g$ as required.

\vspace{0.7mm}
Conversely, suppose that the given condition holds. To prove $L_f\leq L_g$ in $\mathcal{B}(X,\mathcal{P})$, we shall construct $h\in \mathcal{B}(X,\mathcal{P})$ such that $f=hg$. For this, let $i\in I$. By hypothesis, we have $X_if\subseteq X_{i\alpha}g$. Therefore for each $x\in X_i$, fix $x'\in X_{i\alpha}$ such that $xf=x'g$. Define $h\in T(X)$ as follows: Given $x\in X$, there exists $i\in I$ such that $x\in X_i$; we let $xh=x'$. Clearly $X_ih\subseteq X_{i\alpha}$, since $x' \in X_{i\alpha}$. Therefore $f\in T(X,\mathcal{P})$ and $\chi^{(h)}=\alpha$, so $h\in \mathcal{B}(X,\mathcal{P})$. Finally, we prove that $f=hg$. For this, let $x\in X$. Then $x\in X_i$ for some $i\in I$. Therefore, since $xf=x'g$, we obtain $x(hg)=(xh)g=x'g=xf$, which yields $f=hg$. Thus $L_f\leq L_g$ in $\mathcal{B}(X,\mathcal{P})$.
\end{proof}	


In the following theorem, we describe the relation $\mathcal{L}$ on $\mathcal{B}(X,\mathcal{P})$.

\begin{theorem}\label{b-l-green}
Let $f,g\in \mathcal{B}(X,\mathcal{P})$. Then $(f,g)\in \mathcal{L}$ in $\mathcal{B}(X,\mathcal{P})$ if and only if there exists $\alpha \in \Sym(I)$ such that $X_if=X_{i\alpha}g$ for all $i\in I$.	
\end{theorem}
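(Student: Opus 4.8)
The plan is to characterize the relation $\mathcal{L}$ by combining the preceding lemma with a standard symmetry argument. Recall that $(f,g)\in\mathcal{L}$ means precisely that $L_f\leq L_g$ and $L_g\leq L_f$ simultaneously. So the natural strategy is to apply Lemma \ref{b-l-lemma} twice, once in each direction, and then reconcile the two resulting permutations into a single $\alpha\in\Sym(I)$ witnessing the equality $X_if=X_{i\alpha}g$ for all $i$.

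For the forward direction, I would assume $(f,g)\in\mathcal{L}$. Applying Lemma \ref{b-l-lemma} to $L_f\leq L_g$ yields some $\alpha\in\Sym(I)$ with $X_if\subseteq X_{i\alpha}g$ for all $i$, and applying it to $L_g\leq L_f$ yields some $\beta\in\Sym(I)$ with $X_ig\subseteq X_{i\beta}f$ for all $i$. The goal is to upgrade the inclusion $X_if\subseteq X_{i\alpha}g$ to an equality. Substituting $i\alpha$ for $i$ in the second inclusion gives $X_{i\alpha}g\subseteq X_{i\alpha\beta}f$, so chaining we obtain $X_if\subseteq X_{i\alpha}g\subseteq X_{i\alpha\beta}f$ for every $i$. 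The key observation here is that the blocks $\{X_if : i\in I\}$ are the images of the blocks of $\mathcal{P}$ under $f$, and since $\chi^{(f)}\in\Sym(I)$, distinct blocks $X_i$ map into distinct blocks $X_{i\chi^{(f)}}$; I would argue from this that the family $\{X_if\}_i$ behaves rigidly enough that the composite inclusion $X_if\subseteq X_{i\alpha\beta}f$ forces $\alpha\beta$ to fix the relevant index, and hence $X_if=X_{i\alpha}g$. Concretely, I expect to show that the permutation $\gamma:=\alpha\beta$ must satisfy $X_if\subseteq X_{i\gamma}f$, and because $X_if$ and $X_{i\gamma}f$ lie in the blocks $X_{i\chi^{(f)}}$ and $X_{i\gamma\chi^{(f)}}$ of the partition $\mathcal{P}$ respectively, the nonempty intersection forces $i\chi^{(f)}=i\gamma\chi^{(f)}$, whence $i\gamma=i$; feeding this back collapses the chain to the desired equality.

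The converse is the easy direction: if there exists $\alpha\in\Sym(I)$ with $X_if=X_{i\alpha}g$ for all $i$, then in particular $X_if\subseteq X_{i\alpha}g$, so Lemma \ref{b-l-lemma} gives $L_f\leq L_g$. Symmetrically, from $X_{i\alpha}g=X_if$ we can replace $i$ by $i\alpha^{-1}$ to read $X_ig=X_{i\alpha^{-1}}f$, and applying Lemma \ref{b-l-lemma} with the permutation $\alpha^{-1}$ yields $L_g\leq L_f$. Combining the two gives $(f,g)\in\mathcal{L}$.

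The main obstacle I anticipate is the rigidity step in the forward direction: turning a two-sided inclusion into an equality. The subtlety is that, a priori, two different blocks of $\mathcal{P}$ could have images under $f$ that nest inside the image of a single block under $g$, so the inclusion $X_if\subseteq X_{i\alpha}g$ need not immediately equalize. The resolution must exploit that both $f$ and $g$ preserve $\mathcal{P}$ with bijective characters, so that the assignment $i\mapsto X_if$ is injective (each $X_if$ sits in its own distinct block $X_{i\chi^{(f)}}$), and likewise for $g$; this injectivity, together with $\alpha,\beta$ being permutations, is what pins down $\alpha\beta=\id$ on the relevant indices and forces the inclusions to be equalities. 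I would make this precise by tracking which block of $\mathcal{P}$ each image set lands in and using that a set cannot be a proper subset of a translate of itself within a finite-to-itself indexing governed by a permutation.
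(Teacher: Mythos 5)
Your proof is correct, but its forward direction takes a genuinely different route from the paper's. The paper never invokes Lemma \ref{b-l-lemma} in that direction: it unfolds $(f,g)\in\mathcal{L}$ into factorizations $f=hg$ and $g=h'f$ with $h,h'\in\mathcal{B}(X,\mathcal{P})$, applies Lemma \ref{rakbud-lemma} to get $\chi^{(f)}=\chi^{(h)}\chi^{(g)}$ and $\chi^{(g)}=\chi^{(h')}\chi^{(f)}$, and cancels (all characters lying in $\Sym(I)$) to conclude that $\beta:=\chi^{(h')}$ equals $\alpha^{-1}$ where $\alpha:=\chi^{(h)}$; the inclusions $X_if=(X_ih)g\subseteq X_{i\alpha}g$ and $X_{i\alpha}g=(X_{i\alpha}h')f\subseteq X_if$ then yield equality at once, because the witnesses' characters come with the inverse relation built in. You instead take $\alpha,\beta$ as abstract witnesses produced by Lemma \ref{b-l-lemma} applied in each direction, so you must prove $\alpha\beta=\id_I$ yourself; your rigidity argument does this correctly: $X_if$ is nonempty and lies in the block $X_{i\chi^{(f)}}$, while $X_if\subseteq X_{i\alpha}g\subseteq X_{(i\alpha)\beta}f\subseteq X_{(i\alpha\beta)\chi^{(f)}}$, so disjointness of distinct blocks of $\mathcal{P}$ forces $i\chi^{(f)}=(i\alpha\beta)\chi^{(f)}$, and injectivity of $\chi^{(f)}$ gives $i\alpha\beta=i$, which collapses the chain of inclusions to the desired equality $X_if=X_{i\alpha}g$. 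In effect, your block-disjointness argument is a set-theoretic surrogate for the paper's group-theoretic cancellation of $\chi^{(f)}$: yours is more modular, reusing the lemma as a black box and never revisiting factorizations, while the paper's is more direct. (Your closing remark about finiteness is unneeded; disjointness of blocks plus injectivity of $\chi^{(f)}$ suffice, with no finiteness assumption anywhere, which matters since $I$ may be infinite.) The converse directions coincide: both substitute $i\alpha^{-1}$ for $i$ and apply Lemma \ref{b-l-lemma} twice.
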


\begin{proof}[\textbf{Proof}]

Suppose that $(f,g)\in \mathcal{L}$ in $\mathcal{B}(X,\mathcal{P})$. Then there exist $h,h'\in \mathcal{B}(X,\mathcal{P})$ such that $f=hg$ and $g=h'f$. It follows from Lemma \ref{rakbud-lemma} that $\chi^{(f)}=\chi^{(h)}\chi^{(g)}$ and $\chi^{(g)}=\chi^{(h')}\chi^{(f)}$. Note that $\chi^{(f)},\chi^{(g)},\chi^{(h)},\chi^{(h')}\in \Sym(I)$, write $\chi^{(h)}=\alpha$ and $\chi^{(h')}=\beta$. Clearly $\beta=\alpha^{-1}$. To prove the desired result, let $i\in I$. Note that $X_ih\subseteq X_{i\alpha}$, and so $X_if=X_i(hg)=(X_ih)g\subseteq X_{i\alpha}g$. For the reverse inclusion, note that $\beta=\alpha^{-1}$ and $X_{i\alpha}h'\subseteq X_{(i\alpha)\beta} = X_i$. Therefore, since $g= h'f$, we obtain $X_{i\alpha}g=X_{i\alpha}(h'f)=(X_{i\alpha}h')f\subseteq X_{i}f$ as required.

\vspace{0.7mm}
Conversely, suppose that there exists $\alpha \in \Sym(I)$ such that $X_if=X_{i\alpha}g$ for all $i\in I$. Then $L_f\leq L_g$ in $\mathcal{B}(X,\mathcal{P})$ by Lemma \ref{b-l-lemma}. Next, note that $\alpha^{-1}\in \Sym(I)$ and write $\alpha^{-1}=\beta$. Then by hypothesis, we get $X_ig = X_{(i\beta)\alpha}g= X_{i\beta}f$ for all $i\in I$. Therefore $L_g\leq L_f$ in  $\mathcal{B}(X,\mathcal{P})$ by Lemma \ref{b-l-lemma}. Thus $(f,g)\in \mathcal{L}$ in $\mathcal{B}(X,\mathcal{P})$.	
\end{proof}	

To describe the relation $\mathcal{R}$ on $\mathcal{B}(X,\mathcal{P})$, we need the following terminology and Lemma \ref{b-r-lemma}.

\vspace{1mm}

Let $f,g\in T(X)$. We say that $\pi(f)$ \emph{refines} $\pi(g)$, denoted by $\pi(f)\preceq \pi(g)$, if $\ker(f)\subseteq \ker(g)$. Equivalently, we have $\pi(f)\preceq \pi(g)$ if for every $P\in \pi(f)$, there exists $Q\in \pi(g)$ such that $P\subseteq Q$. We shall write $\pi(f)=\pi(g)$ if $\pi(f)\preceq \pi(g)$ and $\pi(g)\preceq \pi(f)$.

\vspace{1mm}

\begin{lemma}\label{b-r-lemma}
Let $f,g\in \mathcal{B}(X,\mathcal{P})$. Then $R_f\leq R_g$ in $\mathcal{B}(X,\mathcal{P})$ if and only if $\pi(g)\preceq \pi(f)$.	
\end{lemma}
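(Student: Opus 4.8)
The plan is to prove both implications of the biconditional $R_f \leq R_g \iff \pi(g) \preceq \pi(f)$ by translating the ideal-theoretic condition $fS^1 \subseteq gS^1$ into the existence of a factorization $f = gh$ with $h \in \mathcal{B}(X,\mathcal{P})$, and then reading off the kernel relationship. For the forward direction, I would start from $R_f \leq R_g$, which gives $h \in \mathcal{B}(X,\mathcal{P})$ with $f = gh$. From this factorization the refinement $\pi(g) \preceq \pi(f)$ is immediate: if $(x,y) \in \ker(g)$, so $xg = yg$, then $xf = x(gh) = (xg)h = (yg)h = yf$, hence $(x,y) \in \ker(f)$; thus $\ker(g) \subseteq \ker(f)$, which is exactly $\pi(g) \preceq \pi(f)$ by the definition recalled just above the lemma.

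The substantive direction is the converse. Assuming $\pi(g) \preceq \pi(f)$, I must build $h \in \mathcal{B}(X,\mathcal{P})$ with $f = gh$, so that $R_f \leq R_g$. The natural idea is to define $h$ on the image $Xg$ so that it undoes $g$ and then applies $f$: for each point $z \in Xg$, choose a preimage $z^{*} \in zg^{-1}$ and set $zh = z^{*}f$. The refinement hypothesis $\ker(g) \subseteq \ker(f)$ guarantees this is well-defined on $Xg$ independently of the chosen preimage (any two preimages of $z$ are $\ker(g)$-related, hence $\ker(f)$-related, hence have the same $f$-image), and it gives $f = gh$ on all of $X$: for $x \in X$, writing $z = xg$, we have $x(gh) = (xg)h = zh = z^{*}f = xf$ since $x$ and $z^{*}$ are both in $zg^{-1}$.

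The main obstacle is that $h$ so far is only defined on $Xg$, and the subtle point is that it must be extended to all of $X$ so that the resulting map lies in $\mathcal{B}(X,\mathcal{P})$ — that is, $h$ must preserve $\mathcal{P}$ and have a bijective character. This is where the structure of $\mathcal{B}(X,\mathcal{P})$ (as opposed to $T(X,\mathcal{P})$) is essential and where I expect the real work to be. I would proceed blockwise. Since $g \in \mathcal{B}(X,\mathcal{P})$, its character $\chi^{(g)}$ is a bijection of $I$, so each block $X_k$ meets $Xg$ in the set $X_k \cap Xg = X_{k(\chi^{(g)})^{-1}}g$, and the partial map defined above sends this set into a single block determined by where $f$ sends $X_{k(\chi^{(g)})^{-1}}$. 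The task is then to choose, for each block $X_k$, a bijection-compatible extension of $h_{\upharpoonright (X_k \cap Xg)}$ to all of $X_k$ mapping into the appropriate target block $X_{k\chi^{(h)}}$, with the target blocks chosen so that the induced character $\chi^{(h)}$ is a permutation of $I$. Concretely, I would define $\chi^{(h)} := (\chi^{(g)})^{-1}\chi^{(f)}$, which is a composite of elements of $\Sym(I)$ and hence lies in $\Sym(I)$; this is forced by the relation $\chi^{(f)} = \chi^{(g)}\chi^{(h)}$ (Lemma \ref{rakbud-lemma}) that any factorization $f = gh$ must satisfy. Extending each partial block map to a full map $X_k \to X_{k\chi^{(h)}}$ is always possible because the target block is nonempty, and the resulting $h$ then satisfies $h \in T(X,\mathcal{P})$ with $\chi^{(h)} \in \Sym(I)$, so $h \in \mathcal{B}(X,\mathcal{P})$. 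With $f = gh$ established and $h$ verified to lie in $\mathcal{B}(X,\mathcal{P})$, we conclude $fS^1 \subseteq gS^1$, i.e. $R_f \leq R_g$, completing the proof.
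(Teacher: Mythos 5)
Your proposal is correct and takes essentially the same approach as the paper: in the converse direction you build the same witness $h$ (a fixed preimage choice under $g$ composed with $f$ on $Xg$, extended off $Xg$ into the target blocks prescribed by $\alpha=(\chi^{(g)})^{-1}\chi^{(f)}$), with well-definedness and $f=gh$ secured by $\ker(g)\subseteq\ker(f)$ exactly as in the paper's construction. The only cosmetic difference is in the forward direction, where you prove $\ker(g)\subseteq\ker(f)$ by a one-line direct computation while the paper cites a lemma of Clifford and Preston.
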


\begin{proof}[\textbf{Proof}]
Suppose that $R_f\leq R_g$ in $\mathcal{B}(X,\mathcal{P})$. Then there exists $h\in \mathcal{B}(X,\mathcal{P})$ such that $f=gh$. It follows from \cite[Lemma 2.6]{clifford-ams61} that $\pi(g)\preceq \pi(f)$, since $f,g,h\in T(X)$.

\vspace{0.7mm}
Conversely, suppose that $\pi(g)\preceq \pi(f)$. To prove $R_f\leq R_g$ in $\mathcal{B}(X,\mathcal{P})$, we shall construct $h\in \mathcal{B}(X,\mathcal{P})$ such that $f=gh$. For this, fix $y_i\in X_i$ for each $i\in I$; fix $x'\in xg^{-1}$ for each $x\in Xg$. Note that $\chi^{(f)}, \chi^{(g)} \in \Sym(I)$, and write $(\chi^{(g)})^{-1}\chi^{(f)}=\alpha$. Define $h\in T(X)$ as follows: Given $x\in X$, there exists $i\in I$ such that $x\in X_i$; we let
\begin{eqnarray*}
xh=
\begin{cases}
x'f         & \text{if $x\in X_i\cap Xg$;}\\
y_{i\alpha} & \text{if $x\in X_i\setminus Xg$}.
\end{cases}
\end{eqnarray*}
Clearly $h$ is well-defined, since $\pi(g)\preceq \pi(f)$. We also observe that $X_ih\subseteq X_{i\alpha}$, since $x' \in X_{i(\chi^{(g)})^{-1}}$ whenever $x\in X_i \cap Xg$, and $(\chi^{(g)})^{-1}\chi^{(f)}=\alpha$. Therefore $h\in T(X,\mathcal{P})$ and $\chi^{(h)}=\alpha$, so $h\in \mathcal{B}(X,\mathcal{P})$. Finally, we prove that $f=gh$. For this, let $x\in X$. Then $x\in X_i$ for some $i\in I$. Therefore, since $x'\in xg^{-1}$ and $xg^{-1}\in \pi(f)$, we obtain $x(gh)=(xg)h = x'f=xf$, which yields $f=gh$. Thus $R_f\leq R_g$ in $\mathcal{B}(X,\mathcal{P})$.
\end{proof}	
\begin{theorem}\label{b-r-green}
Let $f,g\in \mathcal{B}(X,\mathcal{P})$. Then $(f,g)\in \mathcal{R}$ in $\mathcal{B}(X,\mathcal{P})$ if and only if $\pi(f)=\pi(g)$.	
\end{theorem}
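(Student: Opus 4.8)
The plan is to derive the $\mathcal{R}$-relation characterization directly from Lemma \ref{b-r-lemma}, exactly as Theorem \ref{b-l-green} was obtained from Lemma \ref{b-l-lemma}. By definition, $(f,g)\in\mathcal{R}$ in $\mathcal{B}(X,\mathcal{P})$ means $fS^1=gS^1$, which is equivalent to the conjunction $R_f\leq R_g$ and $R_g\leq R_f$. So the statement $(f,g)\in\mathcal{R}$ unfolds into ``$R_f\leq R_g$ and $R_g\leq R_f$ in $\mathcal{B}(X,\mathcal{P})$.''

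The key step is to apply Lemma \ref{b-r-lemma} to each of these two inequalities. First I would note that $R_f\leq R_g$ holds if and only if $\pi(g)\preceq\pi(f)$, and symmetrically $R_g\leq R_f$ holds if and only if $\pi(f)\preceq\pi(g)$; both are immediate instances of Lemma \ref{b-r-lemma} with the roles of $f$ and $g$ interchanged in the second. Conjoining these, $(f,g)\in\mathcal{R}$ is equivalent to the simultaneous conditions $\pi(g)\preceq\pi(f)$ and $\pi(f)\preceq\pi(g)$. By the definition recalled just before Lemma \ref{b-r-lemma}, this pair of refinements is precisely the meaning of $\pi(f)=\pi(g)$. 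This yields the claimed equivalence in both directions at once.

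I do not expect any genuine obstacle here, since the heavy lifting—the construction of the witness $h\in\mathcal{B}(X,\mathcal{P})$ with $f=gh$, which uses the hypothesis $\pi(g)\preceq\pi(f)$ to guarantee well-definedness and uses $(\chi^{(g)})^{-1}\chi^{(f)}\in\Sym(I)$ to keep $h$ inside $\mathcal{B}(X,\mathcal{P})$—has already been carried out inside Lemma \ref{b-r-lemma}. The only point worth stating carefully is that $\preceq$ is an antisymmetric relation on partitions in the sense defined, so that $\pi(g)\preceq\pi(f)$ together with $\pi(f)\preceq\pi(g)$ genuinely collapses to $\pi(f)=\pi(g)$; this is exactly the convention adopted in the sentence ``We shall write $\pi(f)=\pi(g)$ if $\pi(f)\preceq\pi(g)$ and $\pi(g)\preceq\pi(f)$,'' so nothing further needs verification. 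The whole argument is therefore a two-line symmetric application of the preceding lemma, mirroring the structure of the proof of Theorem \ref{b-l-green}.
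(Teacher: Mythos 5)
Your proposal is correct and is exactly the paper's argument: the paper's proof of this theorem is the one-line remark that it ``follows directly from Lemma \ref{b-r-lemma},'' and your two applications of that lemma (once as stated, once with $f$ and $g$ interchanged), combined via the paper's convention that $\pi(f)=\pi(g)$ means mutual refinement, are precisely the details being left implicit. Nothing is missing.
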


\begin{proof}[\textbf{Proof}]
This proof follows directly from Lemma \ref{b-r-lemma}.	
\end{proof}	

To describe the relation $\mathcal{D}$ on $\mathcal{B}(X,\mathcal{P})$, we need the following terminology.

\vspace{1mm}

For any $A\subseteq X$ and any $f\in T(X)$, let $\pi_A(f)=\{M \in \pi(f)\colon M\cap A\neq \varnothing\}$. We shall write $\pi(f)$ instead of $\pi_X(f)$ if $A=X$.

\vspace{1mm}

\begin{theorem}\label{b-d-green}
Let $f,g\in \mathcal{B}(X,\mathcal{P})$. Then $(f,g)\in \mathcal{D}$ in $\mathcal{B}(X,\mathcal{P})$ if and only if there exists $\alpha \in \Sym(I)$ such that $|X_if|=|X_{i\alpha}g|$ for all $i\in I$.	
\end{theorem}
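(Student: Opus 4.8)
The plan is to exploit the definition $\mathcal{D} = \mathcal{L}\circ\mathcal{R}$ together with the descriptions of $\mathcal{L}$ and $\mathcal{R}$ already obtained in Theorems \ref{b-l-green} and \ref{b-r-green}. The single structural fact that makes everything work is that, for any $h\in\mathcal{B}(X,\mathcal{P})$, no $\ker(h)$-class can meet two distinct blocks: if $x\in X_a$ and $y\in X_b$ satisfy $xh=yh$, then $xh\in X_{a\chi^{(h)}}$ and $yh\in X_{b\chi^{(h)}}$ force $a\chi^{(h)}=b\chi^{(h)}$, hence $a=b$ because $\chi^{(h)}\in\Sym(I)$ is injective. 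Consequently each block $X_j$ is partitioned by $\ker(h)$ into exactly $|X_jh|$ classes, so $|X_jh|=|\pi_{X_j}(h)|$, and $\ker(h)=\ker(g)$ forces $|X_jh|=|X_jg|$ for every $j\in I$.

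For the forward implication, I would start from $(f,g)\in\mathcal{D}=\mathcal{L}\circ\mathcal{R}$ to obtain $h\in\mathcal{B}(X,\mathcal{P})$ with $(f,h)\in\mathcal{L}$ and $(h,g)\in\mathcal{R}$. Theorem \ref{b-l-green} supplies $\alpha\in\Sym(I)$ with $X_if=X_{i\alpha}h$, hence $|X_if|=|X_{i\alpha}h|$, while Theorem \ref{b-r-green} gives $\pi(h)=\pi(g)$, i.e. $\ker(h)=\ker(g)$. The structural fact then yields $|X_{i\alpha}h|=|X_{i\alpha}g|$, and combining the two equalities produces $|X_if|=|X_{i\alpha}g|$ for all $i\in I$, as required.

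For the converse I would build a suitable $h$. Given $\alpha\in\Sym(I)$ with $|X_if|=|X_{i\alpha}g|$ for all $i$, the target is an $h$ with $\pi(h)=\pi(g)$ (so $(h,g)\in\mathcal{R}$) and $X_if=X_{i\alpha}h$ for all $i$ (so $(f,h)\in\mathcal{L}$); the latter rewrites, on setting $j=i\alpha$, as $X_jh=X_{j\alpha^{-1}}f$. The cardinalities match, since $|\pi_{X_j}(g)|=|X_jg|=|X_{j\alpha^{-1}}f|$ by hypothesis. So for each $j\in I$ I would fix a bijection $\theta_j\colon \pi_{X_j}(g)\to X_{j\alpha^{-1}}f$ and define $h$ on $X_j$ by $xh=M\theta_j$, where $M\in\pi_{X_j}(g)$ is the $\ker(g)$-class of $x$. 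Because the $\ker(g)$-classes lie inside single blocks, $h$ is well-defined; one checks $X_jh=X_{j\alpha^{-1}}f\subseteq X_{(j\alpha^{-1})\chi^{(f)}}$, so $h\in T(X,\mathcal{P})$ with $\chi^{(h)}=\alpha^{-1}\chi^{(f)}\in\Sym(I)$, whence $h\in\mathcal{B}(X,\mathcal{P})$. Since each $\theta_j$ is injective, $\ker(h)$ agrees with $\ker(g)$ inside every block, and as neither kernel crosses blocks this gives $\pi(h)=\pi(g)$; the defining relation $X_jh=X_{j\alpha^{-1}}f$ gives $X_{i\alpha}h=X_if$. Theorems \ref{b-r-green} and \ref{b-l-green} then deliver $(h,g)\in\mathcal{R}$ and $(f,h)\in\mathcal{L}$, so $(f,g)\in\mathcal{D}$.

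I expect the main obstacle to be the converse, and within it the verification that the constructed $h$ simultaneously satisfies both Green-class conditions. The cardinality bookkeeping must be arranged so that the \emph{same} permutation $\alpha$ witnessing the $\mathcal{L}$-relation is compatible with reusing the kernel of $g$; this is exactly where the observation that a kernel class never spans two blocks is indispensable, since it both guarantees that $h$ is well-defined block-by-block and converts equality of kernels into the equality of image cardinalities used in the forward direction.
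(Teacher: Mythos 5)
Your proof is correct and takes essentially the same route as the paper's: both directions go through $\mathcal{D}=\mathcal{L}\circ\mathcal{R}$ together with Theorems \ref{b-l-green} and \ref{b-r-green}, and your intermediate element $h$ (defined block-by-block by sending each $\ker(g)$-class in $X_j$ through a bijection onto $X_{j\alpha^{-1}}f$) is the same map as the paper's $h$, which is defined on $X_i$ as $g\varphi_i$ for bijections $\varphi_i\colon X_ig\to X_{i\alpha^{-1}}f$. The only difference is presentational: you spell out the fact that kernel classes of elements of $\mathcal{B}(X,\mathcal{P})$ never cross blocks (since $\chi^{(h)}$ is injective), which the paper uses implicitly when it asserts $|\pi_{X_j}(h)|=|X_jh|$ and $\pi_{X_i}(h)=\pi_{X_i}(g)$ for all $i$ implies $\pi(h)=\pi(g)$.
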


\begin{proof}[\textbf{Proof}]
Suppose that $(f,g)\in \mathcal{D}$ in $\mathcal{B}(X,\mathcal{P})$. Then there exists $h\in \mathcal{B}(X,\mathcal{P})$ such that $(f,h)\in \mathcal{L}$ in $\mathcal{B}(X,\mathcal{P})$ and $(h,g)\in \mathcal{R}$ in $\mathcal{B}(X,\mathcal{P})$. Since $(f,h)\in \mathcal{L}$, it follows from Theorem \ref{b-l-green} that there exists $\alpha\in \Sym(I)$ such that $X_kf=X_{k\alpha}h$ for all $k\in I$. To prove the desired result, let $i\in I$ and write $i\alpha=j$. Then $X_if=X_jh$, so $|X_if|=|X_jh|$. Now since $(h,g)\in \mathcal{R}$ in $\mathcal{B}(X,\mathcal{P})$, it follows from Theorem \ref{b-r-green} that $\pi(h)=\pi(g)$. This yields $\pi_{X_j}(h)=\pi_{X_j}(g)$, so $|\pi_{X_j}(h)|=|\pi_{X_j}(g)|$. Notice that $|\pi_{X_j}(h)|=|X_jh|$ and $|\pi_{X_j}(g)|=|X_jg|$, so $|X_jh|=|X_jg|$. Thus $|X_if|=|X_jg|$ as required.

\vspace{0.5mm}
Conversely, suppose that there exists $\alpha \in \Sym(I)$ such that $|X_if|=|X_{i\alpha}g|$ for all $i\in I$. To prove $(f,g)\in \mathcal{D}$ in $\mathcal{B}(X,\mathcal{P})$, we shall construct $h\in \mathcal{B}(X,\mathcal{P})$ such that $(f,h)\in \mathcal{L}$ in $\mathcal{B}(X,\mathcal{P})$ and $(h,g)\in \mathcal{R}$ in $\mathcal{B}(X,\mathcal{P})$. For this, let $i\in I$ and write $\beta=\alpha^{-1}$. Then $|X_ig| = |X_{(i\beta)\alpha}g| = |X_{i\beta}f|$ by hypothesis, so there is a bijection $\varphi_i\colon X_ig\to X_{i\beta}f$. Now define $h\in T(X)$ as follows: Given $x\in X$, there exists $i\in I$ such that $x\in X_i$; we let $xh=x(g\varphi_i)$. Clearly $h\in T(X, \mathcal{P})$ and $\chi^{(h)} = \beta \chi^{(f)}$, since $X_ih=X_i(g\varphi_i)=X_{i\beta}f \subseteq X_{i(\beta\chi^{(f)})}$ for all $i\in I$. Therefore $h\in \mathcal{B}(X,
\mathcal{P})$. Recall that $\beta\in \Sym(I)$ and $X_ih= X_{i\beta}f$ for all $i\in I$. Thus $(f,h)\in \mathcal{L}$ in $\mathcal{B}(X,
\mathcal{P})$ by Theorem \ref{b-l-green} and the fact that $\mathcal{L}$ is a symmetric relation. It is easy to see from definition of $h$ that $\pi_{X_i}(h)=\pi_{X_i}(g)$ for all $i\in I$, which yields $\pi(h)=\pi(g)$. Therefore $(h,g)\in \mathcal{R}$ in $\mathcal{B}(X,\mathcal{P})$ by Theorem \ref{b-r-green}. Thus, since $\mathcal{D} = \mathcal{L} \circ \mathcal{R}$, we conclude that $(f,g)\in \mathcal{D}$ in $\mathcal{B}(X,\mathcal{P})$.
\end{proof}

To describe the relation $\mathcal{J}$ on $\mathcal{B}(X,\mathcal{P})$, we need the following lemma.

\begin{lemma}\label{b-j-lemma}
Let $f,g\in \mathcal{B}(X,\mathcal{P})$. Then $J_f\leq J_g$ in $\mathcal{B}(X,\mathcal{P})$ if and only if there exists $\alpha \in \Sym(I)$ such that $|X_if|\leq |X_{i\alpha}g|$ for all $i\in I$.
\end{lemma}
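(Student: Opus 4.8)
The plan is to express the $\mathcal{J}$-order through the $\mathcal{L}$- and $\mathcal{R}$-orders already computed in Lemmas \ref{b-l-lemma} and \ref{b-r-lemma}. Since $\id_X\in S(X,\mathcal{P})=U(\mathcal{B}(X,\mathcal{P}))$, the semigroup $\mathcal{B}(X,\mathcal{P})$ is a monoid, so $S^1=S$ and $J_f\le J_g$ is equivalent to a factorization $f=hgk$ with $h,k\in\mathcal{B}(X,\mathcal{P})$. Writing $p=gk$, this is in turn equivalent to the existence of $p\in\mathcal{B}(X,\mathcal{P})$ with $L_f\le L_p$ and $R_p\le R_g$. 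I would first record this standard reformulation, and then handle each implication by translating it into the image/kernel language of the earlier lemmas.

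For the forward direction, I would start from $f=hgk$ and apply Lemma \ref{rakbud-lemma} to get $\chi^{(f)}=\chi^{(h)}\chi^{(g)}\chi^{(k)}$, all lying in $\Sym(I)$. Setting $\alpha=\chi^{(h)}$, the inclusion $X_ih\subseteq X_{i\alpha}$ gives $X_if=((X_ih)g)k\subseteq(X_{i\alpha}g)k$ for every $i\in I$. Since $k$ is a map, passing to cardinalities yields $|X_if|\le|(X_{i\alpha}g)k|\le|X_{i\alpha}g|$, which is exactly the desired inequality for this choice of $\alpha$. This half is essentially bookkeeping with characters together with the fact that a function cannot increase the size of an image.

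The substance is the converse, where from the hypothesis $|X_if|\le|X_{i\alpha}g|$ I must manufacture an intermediate $p$. The idea is to let $p$ factor through $g$ block by block, so that its kernel is automatically coarser than that of $g$, while arranging its images to coincide with those of $f$ up to the permutation $\alpha$. Concretely, for each $i\in I$ put $j=i\alpha$; since $|X_jg|\ge|X_if|\ge 1$ and $X_if$ is a nonempty subset of $X_{i\chi^{(f)}}$, there is a surjection $\xi_i\colon X_jg\to X_if$, and I would define $p$ on $X_j$ by $x\mapsto(xg)\xi_i$. This makes $X_{i\alpha}p=X_if$, so $\chi^{(p)}=\alpha^{-1}\chi^{(f)}\in\Sym(I)$ and hence $p\in\mathcal{B}(X,\mathcal{P})$; moreover $\ker(g)\subseteq\ker(p)$, that is $\pi(g)\preceq\pi(p)$, giving $R_p\le R_g$ by Lemma \ref{b-r-lemma}, while $X_if=X_{i\alpha}p$ gives $L_f\le L_p$ by Lemma \ref{b-l-lemma} (with the same $\alpha$). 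The hard part will be checking the consistency of this construction: one must verify that the block-wise pieces assemble into a single well-defined element of $\mathcal{B}(X,\mathcal{P})$ with the claimed character, and that factoring through $g$ genuinely forces $\ker(g)\subseteq\ker(p)$ — here it helps that kernel classes of partition-preserving maps never cross blocks, since $\chi^{(g)}$ is injective.
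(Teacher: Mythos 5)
Your argument is correct, and in the converse direction it takes a genuinely different route from the paper's. The paper proves the converse by explicitly constructing both factors of $f=hgh_1$: it fixes injections $\varphi_i\colon X_if\to X_{i\alpha}g$, defines $h$ by sending each $x$ to a chosen $g$-preimage of $(xf)\varphi_i$, and then defines a second map $h_1$ by cases (using $\varphi_j^{-1}$ on $X(hg)$ and fixed anchor points elsewhere), after which verifying $\chi^{(h_1)}\in\Sym(I)$ and $f=hgh_1$ requires a further case analysis. You instead reuse the preorder lemmas already established: you build a single intermediate element $p$ that factors through $g$ block-by-block via surjections $\xi_i\colon X_{i\alpha}g\to X_if$ (which exist because $|X_if|\le|X_{i\alpha}g|$ and $X_if\neq\varnothing$), note that $X_{i\alpha}p=X_if$ gives $L_f\le L_p$ by Lemma \ref{b-l-lemma}, and that $\ker(g)\subseteq\ker(p)$ gives $R_p\le R_g$ by Lemma \ref{b-r-lemma}; the standard reformulation in a monoid --- $J_f\le J_g$ if and only if there exists $p$ with $L_f\le L_p$ and $R_p\le R_g$ --- then finishes, and your two supporting observations are exactly the right ones (well-definedness of $p$ because $\alpha$ is a bijection of $I$, and the fact that kernel classes of $g$ cannot cross blocks since $\chi^{(g)}$ is injective, which is what forces $\ker(g)\subseteq\ker(p)$). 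What your approach buys is economy and structure: the paper's two explicit constructions collapse into one simple construction plus citations of work already done, and your proof mirrors the paper's own treatment of $\mathcal{D}$ in Theorem \ref{b-d-green}, which likewise goes through an intermediate element; what the paper's approach buys is self-containment, producing the witnesses $h,h_1$ directly without invoking the order-theoretic reformulation. The forward directions are essentially identical.
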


\begin{proof}[\textbf{Proof}]
Suppose that $J_f\leq J_g$ in $\mathcal{B}(X,\mathcal{P})$. Then there exist $h,h_1\in \mathcal{B}(X,\mathcal{P})$ such that $f=hgh_1$. Set $\chi^{(h)}=\alpha$. Clearly $\alpha\in \Sym(I)$, since $h\in \mathcal{B}(X,\mathcal{P})$. To prove the desired result, let $i\in I$. Notice that $X_ih\subseteq X_{i\alpha}$ and therefore
$|X_if|=|(X_ih)gh_1|\leq |(X_{i\alpha}g)h_1|\leq |X_{i\alpha}g|$ as required.

\vspace{0.7mm}
Conversely, suppose that there exists $\alpha \in \Sym(I)$ such that $|X_if|\leq |X_{i\alpha}g|$ for all $i\in I$. Then for every $i\in I$, there exists an injection $\varphi_i\colon X_if\to X_{i\alpha}g$. Now choose $y'\in (y\varphi_i)g^{-1}$ for each $y\in X_if$. To prove the desired result, we shall construct $h,h_1\in \mathcal{B}(X,\mathcal{P})$ such that $f=hgh_1$.

\vspace{0.05cm}
We first define $h\in T(X)$ as follows: Given $x\in X$, there exists $i\in I$ such that $x\in X_i$; write $xf = y$, and we let $xh = y'$. To prove $h\in \mathcal{B}(X,\mathcal{P})$, let $i\in I$ and $x\in X_i$. Set $xf = z$. Then $xh = z'$ by definition of $h$, where $z' \in (z\varphi_i)g^{-1}$. Since $z\in X_i f$, it follows that $z\varphi_i\in X_{i\alpha}g$ by definition of $\varphi_i$. This gives
$(z\varphi_i)g^{-1}\subseteq X_{i\alpha}$, since $\chi^{(g)}\in \Sym(I)$.
Therefore, since $z' \in (z\varphi_i)g^{-1}$, we get $xh = z' \in X_{i\alpha}$. Thus $h\in T(X,\mathcal{P})$ and $\chi^{(h)}=\alpha$, so $h\in \mathcal{B}(X,\mathcal{P})$.

\vspace{0.05cm}
To define $h_1\in \mathcal{B}(X,\mathcal{P})$, we observe for every $i\in I$ that $X_i(f\varphi_i)=X_i(hg)$, since $(xf)' \in ((xf)\varphi_i)g^{-1}$ and $x(hg)=(xh)g = (xf)'g=(xf)\varphi_i$ for all $x\in X_i$.  Also, fix $x_i\in X_i$ for each $i\in I$. Let $\beta=(\alpha\chi^{(g)})^{-1}\chi^{(f)}$. Clearly $\beta\in \Sym(I)$ and $\chi^{(f)}=\alpha\chi^{(g)}\beta$, since  $\chi^{(f)},\chi^{(g)},\alpha\in \Sym(I)$. Define $h_1\in T(X)$ as follows: Given $x\in X$, there exists $i\in I$ such that $x\in X_i$; write $i(\alpha\chi^{(g)})^{-1}=j$, and we let
\begin{eqnarray*}
xh_1=
\begin{cases}
	y          & \text{if $x\in X_i\cap X(hg)$ and $x\varphi_j^{-1}=\{y\}$;}\\
	x_{i\beta} & \text{if $x\in X_i\setminus X(hg)$}.	
\end{cases}
\end{eqnarray*}
To prove $h_1\in \mathcal{B}(X,\mathcal{P})$, let $i\in I$ and $x\in X_i$. Consider the following two possible cases:

\vspace{0.5mm}
\noindent Case 1: Suppose $x\in X_i\cap X(hg)$. Write $i(\alpha\chi^{(g)})^{-1}=k$.
Then, since $ \chi^{(h)}= \alpha$, we see that $X_k (hg) \subseteq X_{k\alpha}g \subseteq X_{k(\alpha\chi^{(g)})} = X_i$. As $\alpha, \chi^{(g)} \in \Sym(I)$, it follows that $X_k(hg) = X_i \cap X (hg)$. Therefore $x\in X_k(hg)\subseteq X_{k\alpha}g$. Note that $\varphi_k\colon X_kf\to X_{k\alpha}g$ is injective and $X_k(f\varphi_k)=X_k(hg)$. It follows that $x\varphi_k^{-1}=\{z\}$ for some $z\in X_kf$. Therefore, since $i(\alpha\chi^{(g)})^{-1}=k$ and $\beta=(\alpha\chi^{(g)})^{-1}\chi^{(f)}$, we obtain $k\chi^{(f)}=(i(\alpha\chi^{(g)})^{-1})\chi^{(f)}=i\beta$, which yields $X_kf\subseteq X_{i\beta}$. Thus, since $z\in X_kf$, we get $xh_1 = z\in X_{i\beta}$.

\vspace{0.5mm}
\noindent Case 2: Suppose $x\in X_i\setminus X(hg)$. Then $xh_1=x_{i\beta}$ by definition of $h_1$, so $xh_1\in X_{i\beta}$.

\vspace{0.1cm}
In either case, we have $xh_1\in X_{i\beta}$. Hence $X_ih_1\subseteq X_{i\beta}$. This gives $h_1\in T(X,\mathcal{P})$ and $\chi^{(h_1)}=\beta$, so $h_1\in \mathcal{B}(X,\mathcal{P})$.

\vspace{0.1cm}
Finally, we prove that $f=hgh_1$. For this, let $x\in X$. Then $x\in X_i$ for some $i\in I$. Recall that $\varphi_i\colon X_if\to X_{i\alpha}g$ is an injection. Therefore, since $xh=(xf)'\in ((xf)\varphi_i)g^{-1}\subseteq X_{i\alpha}$, we obtain
$x(hgh_1)=(xh)(gh_1)=(xf)'(gh_1)=(x(f\varphi_i))h_1=xf$,
which yields $f=hgh_1$ as required.
\end{proof}
\vspace{0.5mm}
\begin{theorem}\label{b-j-green}
Let $f,g\in \mathcal{B}(X,\mathcal{P})$. Then $(f,g)\in \mathcal{J}$ in $\mathcal{B}(X,\mathcal{P})$ if and only if there exist $\alpha,\beta\in \Sym(I)$ such that $|X_if|\leq |X_{i\alpha}g|$ and $|X_ig|\leq |X_{i\beta}f|$ for all $i\in I$.	
\end{theorem}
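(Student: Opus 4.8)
The plan is to deduce this theorem directly from Lemma \ref{b-j-lemma}, since the substantive combinatorial work has already been carried out there. The key observation is that the relation $\mathcal{J}$ is exactly the equivalence obtained by symmetrizing the induced partial order on $\mathcal{J}$-classes: by definition $(f,g)\in \mathcal{J}$ means $S^1fS^1 = S^1gS^1$, and this set equality holds if and only if $S^1fS^1 \subseteq S^1gS^1$ and $S^1gS^1 \subseteq S^1fS^1$, that is, if and only if $J_f\leq J_g$ and $J_g\leq J_f$ in $\mathcal{B}(X,\mathcal{P})$.

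With this reformulation in hand, I would simply invoke Lemma \ref{b-j-lemma} in each direction. First, applying the lemma to $J_f\leq J_g$ produces some $\alpha\in \Sym(I)$ with $|X_if|\leq |X_{i\alpha}g|$ for all $i\in I$. Second, applying the lemma to $J_g\leq J_f$ produces some $\beta\in \Sym(I)$ with $|X_ig|\leq |X_{i\beta}f|$ for all $i\in I$. Conversely, given $\alpha$ and $\beta$ satisfying the two stated inequalities, the same two applications of Lemma \ref{b-j-lemma} yield $J_f\leq J_g$ and $J_g\leq J_f$ respectively, whence $J_f = J_g$ and so $(f,g)\in \mathcal{J}$. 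The permutations $\alpha$ and $\beta$ are allowed to be different, which is why the statement carries two independent symbols rather than a single $\alpha$ and its inverse; I would take care to note that no relation between $\alpha$ and $\beta$ is asserted or needed.

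I do not anticipate any genuine obstacle here, as the proof is a short formal argument bridging Lemma \ref{b-j-lemma} and the definition of $\mathcal{J}$. The only point meriting care is the opening equivalence $(f,g)\in\mathcal{J} \iff (J_f\leq J_g \text{ and } J_g\leq J_f)$, which rests on the fact that $\mathcal{J}$ is defined via the two-sided principal ideals and that these are ordered by inclusion (as recalled in the Preliminaries). Since $\mathcal{B}(X,\mathcal{P})$ need not be a monoid, I would keep the $S^1$ notation throughout rather than silently passing to $S$, so that the containments of principal ideals are literally the defining conditions for $J_f\leq J_g$ and $J_g\leq J_f$. Consequently the proof can be written in just a few lines, consisting of the symmetrization remark followed by two citations of Lemma \ref{b-j-lemma}.
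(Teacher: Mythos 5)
Your proposal is correct and matches the paper exactly: the paper's own proof of Theorem~\ref{b-j-green} is the single line ``This proof follows directly from Lemma~\ref{b-j-lemma},'' and your symmetrization argument (splitting $(f,g)\in\mathcal{J}$ into $J_f\leq J_g$ and $J_g\leq J_f$, then applying the lemma in each direction) is precisely the intended content of that remark.
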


\begin{proof}[\textbf{Proof}]
This proof follows directly from Lemma \ref{b-j-lemma}.		
\end{proof}


\vspace{1mm}
In the following proposition, we prove that the relations $\mathcal{D}$ and $\mathcal{J}$ on $\mathcal{B}(X,\mathcal{P})$ coincide when $\mathcal{P}$ is finite.

\begin{proposition}\label{p-finite}
If $\mathcal{P}$ is a finite partition of $X$, then $\mathcal{D}=\mathcal{J}$ on $\mathcal{B}(X,\mathcal{P})$.
\end{proposition}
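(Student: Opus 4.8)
The plan is to exploit the two characterizations already established in this section. By Theorem \ref{b-d-green}, $(f,g)\in\mathcal{D}$ precisely when some $\alpha\in\Sym(I)$ satisfies $|X_if|=|X_{i\alpha}g|$ for all $i\in I$, while by Theorem \ref{b-j-green}, $(f,g)\in\mathcal{J}$ precisely when there are $\alpha,\beta\in\Sym(I)$ with $|X_if|\le|X_{i\alpha}g|$ and $|X_ig|\le|X_{i\beta}f|$ for all $i\in I$. Since $\mathcal{D}=\mathcal{L}\circ\mathcal{R}\subseteq\mathcal{J}$ holds in every semigroup (both $\mathcal{L}$ and $\mathcal{R}$ are contained in the equivalence $\mathcal{J}$), it suffices to prove the reverse inclusion $\mathcal{J}\subseteq\mathcal{D}$ under the standing hypothesis that $I$ is finite, say $|I|=n$.

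So I would take $(f,g)\in\mathcal{J}$ and set $a_i=|X_if|$ and $b_i=|X_ig|$ for $i\in I$; these are cardinals, possibly infinite, and the $\mathcal{J}$-characterization furnishes $\alpha,\beta\in\Sym(I)$ with $a_i\le b_{i\alpha}$ and $b_i\le a_{i\beta}$ for all $i$. This says precisely that the finite family $(a_i)_{i\in I}$ is dominated by $(b_i)_{i\in I}$ under the bijection $\alpha$, and $(b_i)_{i\in I}$ by $(a_i)_{i\in I}$ under $\beta$. The goal is then to produce a single $\gamma\in\Sym(I)$ with $a_i=b_{i\gamma}$ for all $i$, equivalently to show that $(a_i)_{i\in I}$ and $(b_i)_{i\in I}$ coincide as multisets of cardinals; feeding such a $\gamma$ back into Theorem \ref{b-d-green} immediately gives $(f,g)\in\mathcal{D}$.

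The combinatorial heart is an elementary lemma that I would isolate and prove by a sorting-and-counting argument: if $(a_i)$ and $(b_i)$ are families of cardinals indexed by a finite set and some bijection $\sigma$ satisfies $a_i\le b_{i\sigma}$ for all $i$, then, arranging both families in nondecreasing order as $a_{(1)}\le\cdots\le a_{(n)}$ and $b_{(1)}\le\cdots\le b_{(n)}$, one has $a_{(k)}\le b_{(k)}$ for every $k$. Indeed, there are at least $n-k+1$ indices $i$ with $a_i\ge a_{(k)}$, and $\sigma$ sends them to $n-k+1$ \emph{distinct} indices $j$ with $b_j\ge a_i\ge a_{(k)}$; hence at least $n-k+1$ of the $b$'s are $\ge a_{(k)}$, which forces $b_{(k)}\ge a_{(k)}$. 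Applying this to both domination relations yields $a_{(k)}\le b_{(k)}$ and $b_{(k)}\le a_{(k)}$ for all $k$, so the sorted families agree entrywise; reading off a bijection of $I$ that matches indices carrying equal values produces the desired $\gamma$.

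I expect the main obstacle to lie entirely in this combinatorial lemma rather than in any semigroup-theoretic step. Two points demand care. First, the entries $a_i,b_i$ may be infinite cardinals, so the argument relies on cardinals being linearly (indeed well-) ordered, which I would acknowledge as an appeal to the axiom of choice, while noting that the counting of \emph{indices} remains finite and causes no difficulty. Second, ties among the sorted values must be handled cleanly, which the phrasing ``at least $n-k+1$ indices'' is designed to accommodate. The finiteness of $I$ is essential precisely here: it is what makes the sorting available, and it is the sole place where the hypothesis is used, consistent with the paper's later necessary-and-sufficient condition allowing $\mathcal{D}\neq\mathcal{J}$ for infinite $\mathcal{P}$.
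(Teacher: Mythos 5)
Your proposal is correct, but it reaches the conclusion by a genuinely different route than the paper. The paper also starts from Theorems \ref{b-j-green} and \ref{b-d-green} and the two permutations $\alpha,\beta$, but instead of sorting, it composes them: from $|X_if|\leq|X_{i\alpha}g|\leq|X_{i(\alpha\beta)}f|$ and the fact that $\alpha\beta\in\Sym([m])$ has finite order $k$, it chains
\[
|X_if|\leq |X_{i(\alpha\beta)}f|\leq\cdots\leq|X_{i(\alpha\beta)^k}f|=|X_if|,
\]
forcing equality throughout, whence $|X_if|=|X_{i\alpha}g|$ for every $i$; thus the \emph{same} $\alpha$ produced by the $\mathcal{J}$-characterization already witnesses $\mathcal{D}$, and no new permutation needs to be built. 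Your sorting-and-counting lemma instead shows the two families $(|X_if|)_{i\in I}$ and $(|X_ig|)_{i\in I}$ agree as multisets and then constructs a possibly different matching $\gamma$. Both arguments use finiteness of $I$ exactly once (finite order of $\alpha\beta$, respectively the availability of sorting), and both are complete. The trade-offs: the paper's argument is shorter and foundationally lighter, since squeezing a cycle of inequalities only needs antisymmetry of the cardinal order (Cantor--Schr\"oder--Bernstein), whereas sorting a family of possibly infinite cardinals needs their linear ordering, i.e.\ an appeal to choice, as you note. On the other hand, your route proves more in a useful direction: the multiset equality you establish is precisely the finite case of the criterion $n_\lambda^f=n_\lambda^g$ that the paper only introduces afterwards (Proposition \ref{d-green-alter}), so your proof makes explicit the bridge between the $\mathcal{J}$-characterization and that later $\mathcal{D}$-characterization, which is also the mechanism behind the paper's subsequent Theorem \ref{D=J-BXP-size-3-finite}. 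Your handling of ties and of infinite cardinal entries in the lemma is sound as stated.
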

\begin{proof}[\textbf{Proof}]
Let $\mathcal{P}=\{X_i\colon i\in [m]\}$, where $m\in \mathbb{N}$. In general, we have $\mathcal{D}\subseteq \mathcal{J}$. For the reverse inclusion, let $(f,g)\in \mathcal{J}$ in $\mathcal{B}(X,\mathcal{P})$. Then by Theorem \ref{b-j-green}, there exist $\alpha,\beta\in \Sym([m])$ such that $|X_if|\leq |X_{i\alpha}g|$ and $|X_ig|\leq |X_{i\beta}f|$ for all $i\in [m]$. Therefore for every $i\in [m]$, we obtain
\begin{eqnarray*}
\qquad\qquad |X_if|\leq |X_{i\alpha}g|\leq|X_{(i\alpha)\beta}f|=|X_{i(\alpha\beta)}f| \qquad\qquad \cdots \text{(1).}
\end{eqnarray*}

Note that $m$ is finite and $\alpha\beta\in \Sym([m])$. Therefore there exists $k\in \mathbb{N}$ such that $j(\alpha\beta)^k=j$ for all $j\in [m]$. 	
\vspace{0.5mm}

In view of Theorem \ref{b-d-green}, in order to show that $(f,g)\in \mathcal{D}$ in $\mathcal{B}(X,\mathcal{P})$, it suffices to prove that $|X_if|=|X_{i\alpha}g|$ for all $i\in [m]$. For this, let $i\in [m]$. Then from inequality \rm(1), we have $|X_if|\leq |X_{i(\alpha\beta)}f|$. Therefore, since $(\alpha\beta)^k$ is identity map on $[m]$, we obtain by using inequality \rm(1) that
\[|X_if|\leq |X_{i(\alpha\beta)}f|\leq |X_{i(\alpha\beta)^2}f|\leq\cdots \leq |X_{i(\alpha\beta)^k}f|=|X_if|\]
whence $|X_if|=|X_{i(\alpha\beta)}f|$. Note from inequality \rm(1) that $|X_if|\leq |X_{i\alpha}g|\leq |X_{i(\alpha\beta)}f|$, so $|X_if|=|X_{i\alpha}g|$. Hence, since $\alpha\in \Sym([m])$, we conclude from  Theorem \ref{b-d-green} that $(f,g)\in \mathcal{D}$ in $\mathcal{B}(X,\mathcal{P})$ as required.
\end{proof}

We now introduce the following terminology.

\vspace{1.5mm}
For any $f\in \mathcal{B}(X,\mathcal{P})$, let $\mathcal{P}_f=\{X_if\colon i\in I\}$.
It is clear that $\mathcal{P}_f$ is a partition of the range set $Xf$ of $f$, since $\chi^{(f)}\in \Sym(I)$.

\vspace{1.5mm}

Note that if $\mathcal{P}$ is an infinite partition consisting of only singleton blocks, then $\mathcal{B}(X,\mathcal{P})=\Sym(X)$, so  $\mathcal{D}=\mathcal{J}$ on $\mathcal{B}(X,\mathcal{P})$. However, for an infinite partition $\mathcal{P}$, it is not always true that $\mathcal{D}=\mathcal{J}$ on $\mathcal{B}(X,\mathcal{P})$, as shown in the following example.

\begin{example}
Let $X=\mathbb{N}$. Consider the partition $\mathcal{P} = \{X_i \colon i\in I\}$ of $X$, where $I=\{1,3,5k,5k+1 \colon k\in \mathbb{N}\}$; $X_1=\{1,2\}$, $X_3=\{3,4\}$, and for each $k\in \mathbb{N}$, $X_{5k}=\{5k\}$, $X_{5k+1}=\{5k+1,5k+2,5k+3,5k+4\}$. Take mappings $f,g\colon X \to X$ defined by $xg = x$ and
\begin{eqnarray*}
xf=
\begin{cases}
1    &  \text{if $x\in X_1$;}\\	
x    &  \text{otherwise}.
\end{cases}
\end{eqnarray*}
Clearly $f,g\in T(X,\mathcal{P})$ and $\chi^{(f)}=\chi^{(g)}=\id_I$, so $f,g\in \mathcal{B}(X,\mathcal{P})$. Notice that $\mathcal{P}_f$ has exactly one block of cardinality $2$, while $\mathcal{P}_g$ has two blocks of cardinality $2$. Therefore there does not exist any $\alpha\in \Sym(I)$ such that $|X_if|=|X_{i\alpha}g|$ for all $i\in I$. Hence $(f,g)\notin \mathcal{D}$ in $\mathcal{B}(X,\mathcal{P})$ by Theorem \ref{b-d-green}.

\vspace{0.5mm}
Finally, we prove that $(f,g)\in \mathcal{J}$ in $\mathcal{B}(X,\mathcal{P})$. Consider $\alpha,\beta\in T(I)$ defined by $i\alpha=i$ and
\begin{eqnarray*}
i\beta=
\begin{cases}
6           & \text{if $i=1$;}\\
3           & \text{if $i=3$;}\\
1           & \text{if $i=5$;}\\
5(k-1)      & \text{if $i=5k$, where $k\geq 2$;}\\
5(k+1)+1    & \text{if $i=5k+1$, where $k\geq 1$}.
\end{cases}
\end{eqnarray*}
Observe that $\alpha,\beta\in \Sym(I)$. For every $i\in I$, it is also routine to verify that $|X_if|\leq |X_{i\alpha}g|$ and $|X_ig|\leq |X_{i\beta}f|$. Therefore $(f,g)\in \mathcal{J}$ in $\mathcal{B}(X,\mathcal{P})$ by Theorem \ref{b-j-green}, and thus $(f,g)\in \mathcal{J}\setminus \mathcal{D}$ in $\mathcal{B}(X,\mathcal{P})$.
\end{example}

In the following proposition, we give a sufficient condition under which the relations $\mathcal{D}$ and $\mathcal{J}$ on $\mathcal{B}(X,\mathcal{P})$ do not coincide.
\begin{proposition}\label{p3-infinite}
Let $\mathcal{P} = \{X_i \colon i\in I\}$ be a partition of $X$. If $J=\{i\in I \colon |X_i|\geq 3\}$ is an infinite subset of $I$, then $\mathcal{D}\neq\mathcal{J}$ on $\mathcal{B}(X,\mathcal{P})$.
\end{proposition}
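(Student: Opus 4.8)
The plan is to construct explicit $f,g\in\mathcal{B}(X,\mathcal{P})$ that are $\mathcal{J}$-related but not $\mathcal{D}$-related, exploiting the infinitely many blocks of size at least $3$ in the spirit of the preceding Example, but arranged so that the obstruction is completely transparent. Since $J$ is infinite, I would first fix a countably infinite subset $C=\{i_0,i_1,i_2,\dots\}\subseteq J$ chosen so that $N:=I\setminus C$ is still infinite; this is always possible, since one may pick a countably infinite $\{d_1,d_2,\dots\}\subseteq J$ and set $C=\{d_1,d_3,d_5,\dots\}$, whence $N\supseteq\{d_2,d_4,\dots\}$ is infinite. Both $f$ and $g$ will fix every block, i.e.\ $\chi^{(f)}=\chi^{(g)}=\id_I$, and will differ only in the image sizes they produce on the blocks indexed by $C$.

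Concretely, using that $|X_{i}|\geq 3$ for every $i\in C$, I would define $f$ and $g$ as follows. On each block $X_i$ with $i\in N$, let both $f$ and $g$ collapse $X_i$ onto a single point, so $|X_if|=|X_ig|=1$. On $X_{i_0}$, let $f$ map onto a fixed $2$-element subset and $g$ onto a fixed $3$-element subset, so $|X_{i_0}f|=2$ and $|X_{i_0}g|=3$. On each $X_{i_n}$ with $n\geq 1$, let both $f$ and $g$ map onto a $3$-element subset, so $|X_{i_n}f|=|X_{i_n}g|=3$. Each of $f,g$ preserves $\mathcal{P}$ and has character $\id_I\in\Sym(I)$, hence lies in $\mathcal{B}(X,\mathcal{P})$. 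The purpose of collapsing the $N$-blocks to image size $1$ is to guarantee that image size $2$ occurs exactly once, and only for $f$: every $g$-image size lies in $\{1,3\}$, while $|X_{i_0}f|=2$.

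Failure of $\mathcal{D}$ is then immediate from Theorem \ref{b-d-green}: any $\alpha\in\Sym(I)$ with $|X_if|=|X_{i\alpha}g|$ for all $i$ would force $|X_{i_0\alpha}g|=|X_{i_0}f|=2$, which is impossible since no block has $g$-image size $2$. Hence $(f,g)\notin\mathcal{D}$. For $\mathcal{J}$ I would invoke Theorem \ref{b-j-green} and exhibit the two permutations directly. The inequality $|X_if|\leq|X_ig|$ holds for every $i$ (comparing $1\leq1$, $2\leq3$, $3\leq3$), so $\alpha=\id_I$ handles one direction. For the other direction I need $\beta\in\Sym(I)$ with $|X_ig|\leq|X_{i\beta}f|$; the only real constraint is that each block of $g$-image size $3$ (these are exactly the $C$-blocks) be sent to a block of $f$-image size $3$ (these are exactly $\{i_1,i_2,\dots\}$). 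This is the crux, and the place where the infinitude is used: I would set $\beta(i_n)=i_{n+1}$ for all $n\geq0$, a Hilbert-hotel shift that maps $C$ bijectively onto the $f$-size-$3$ indices while freeing the index $i_0$, and then extend $\beta$ to a bijection $N\to N\cup\{i_0\}$, which exists precisely because $N$ is infinite. Since blocks in $N$ have $g$-image size $1$, their $\beta$-images are unconstrained, so $\beta\in\Sym(I)$ satisfies the required inequality. This yields $(f,g)\in\mathcal{J}$ and hence $\mathcal{D}\neq\mathcal{J}$.

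The one point needing care — and the reason the hypothesis is $|X_i|\geq3$ rather than $\geq2$ — is exactly this separation of three roles among the image sizes: size $1$ for the inert blocks $N$ (equal for $f$ and $g$, and allowed to be infinite so that the shift $\beta$ can be completed), size $3$ for the abundant large blocks that absorb the defect under $\beta$, and size $2$ for the single clean discrepancy that defeats $\mathcal{D}$. Two sizes would not suffice, since the filler value and the discrepancy value would collide and the difference would be swamped by the infinitely many inert blocks. I expect the only step requiring genuine attention to be the verification that $\beta$ assembles into a bona fide bijection of $I$ — i.e.\ the bookkeeping across the $C/N$ boundary, where one checks that the codomains $C\setminus\{i_0\}$ and $N\cup\{i_0\}$ partition $I$ — while everything else reduces to a direct appeal to Theorems \ref{b-d-green} and \ref{b-j-green}.
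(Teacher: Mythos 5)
Your proposal is correct and takes essentially the same approach as the paper: both proofs build explicit $f,g$ with $\chi^{(f)}=\chi^{(g)}=\id_I$ whose block-image sizes lie in $\{1,2,3\}$, use a single block of $f$-image size $2$ (against $g$-image sizes in $\{1,3\}$) to defeat $\mathcal{D}$ via Theorem \ref{b-d-green}, and use a Hilbert-hotel style bijection exploiting the infinitely many size-$3$ blocks, plus a bijection absorbing the leftover index into the infinitely many size-$1$ blocks, to witness $\mathcal{J}$ via Theorem \ref{b-j-green}. The only cosmetic differences are that the paper splits $J$ into two pieces of full cardinality rather than extracting a countable $C$, and its nontrivial shift sits in $\alpha$ (with $\beta=\id_I$) whereas yours sits in $\beta$ (with $\alpha=\id_I$).
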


\begin{proof}[\textbf{Proof}]
Since $J$ is infinite, there exists a proper subset $K$ of $J$ such that $|K|=|J\setminus K|=|J|$. Let $k\in K$, and fix distinct $y_k,y_k'\in X_k$. Also, fix distinct $u_i,v_i,w_i\in X_i$ for each $i\in K\setminus \{k\}$; fix $z_i\in X_i$ for each $i\in I\setminus K$. Consider mappings $f,g\colon X \to X$ defined by
\begin{eqnarray*}
xf=
\begin{cases}
y_k   &  \text{if $x=y_k$;}\\	
y_k'   &  \text{if $x\in X_k\setminus \{y_k\}$;}\\
u_i   &  \text{if $x=u_i\in X_i$, where $i\in K\setminus \{k\}$;}\\
v_i   &  \text{if $x=v_i\in X_i$, where $i\in K\setminus \{k\}$;}\\
w_i   &  \text{if $x\in X_i\setminus \{u_i,v_i\}$, where $i\in K\setminus \{k\}$;}\\
z_i   &  \text{if $x\in X_i$, where $i\in I\setminus K$}
\end{cases}
\end{eqnarray*}
and
\begin{eqnarray*}
xg=
\begin{cases}	
y_k   &  \text{if $x\in X_k$;}\\
u_i   &  \text{if $x=u_i\in X_i$, where $i\in K\setminus \{k\}$;}\\
v_i   &  \text{if $x=v_i\in X_i$, where $i\in K\setminus \{k\}$;}\\
w_i   &  \text{if $x\in X_i\setminus \{u_i,v_i\}$, where $i\in K\setminus \{k\}$;}\\
z_i   &  \text{if $x\in X_i$, where $i\in I\setminus K$}.
\end{cases}
\end{eqnarray*}
For every $i\in I$, we observe that $X_if\subseteq X_i$ and $X_ig\subseteq X_i$. Therefore $f,g\in T(X,\mathcal{P})$ and $\chi^{(f)}=\id_I= \chi^{(g)}$. It follows that $f,g\in \mathcal{B}(X,\mathcal{P})$. Notice that $|X_kf|=2$, while $|X_ig|\neq 2$ for all $i\in I$. Therefore there does not exist $\alpha\in \Sym(I)$ such that $|X_kf|=|X_{k\alpha}g|$. Hence $(f,g)\notin \mathcal{D}$ in $\mathcal{B}(X,\mathcal{P})$ by Theorem \ref{b-d-green}.

\vspace{0.5mm}
Finally, we prove that $(f,g)\in \mathcal{J}$ in $\mathcal{B}(X,\mathcal{P})$. For this, we first fix $k'(\neq k)\in K$. Since $K$ is infinite, we see that $|K\setminus \{k\}|=|K\setminus \{k,k'\}|$. Therefore there is a bijection $\varphi \colon K\setminus \{k\}\to K\setminus \{k,k'\}$. Recall that $J$ is an infinite subset of $I$. Since $|J\setminus K|=|J|$ and $J\setminus K\subseteq I\setminus K$, it follows that $I\setminus K$ is infinite. Therefore $|I\setminus K|=|(I\setminus K)\cup \{k\}|$, so there is a bijection $\psi \colon I\setminus K\to (I\setminus K)\cup \{k\}$. Consider mappings $\alpha,\beta\in T(I)$ defined by $i\beta=i$ and
\begin{eqnarray*}
i\alpha=
\begin{cases}
k'          & \text{if $i=k$;}\\
i\varphi    & \text{if $i\in K\setminus \{k\}$;}\\
i\psi       & \text{$i\in I\setminus K$}.
\end{cases}
\end{eqnarray*}
Notice that $\alpha,\beta\in \Sym(I)$. For every $i\in I$, it is routine to verify that $|X_if|\leq |X_{i\alpha}g|$ and $|X_ig|\leq |X_{i\beta}f|$. Therefore $(f,g)\in \mathcal{J}$ in $\mathcal{B}(X,\mathcal{P})$ by Theorem \ref{b-j-green}. Thus $\mathcal{D}\neq\mathcal{J}$ on $\mathcal{B}(X,\mathcal{P})$.
\end{proof}

Now we establish an alternative characterization of the relation $\mathcal{D}$ on $\mathcal{B}(X,\mathcal{P})$ in Proposition \ref{d-green-alter}. Before that, we introduce the following terminology.

\vspace{1mm}
Let $f\in \mathcal{B}(X,\mathcal{P})$. Recall that $\mathcal{P}_f=\{X_if\colon i\in I\}$ is a partition of the range set $Xf$ of $f$. For any cardinal $\lambda$, let $I_{\lambda}^f=\{i\in I\colon |X_if|=\lambda\}$ and $n_{\lambda}^f=|I_{\lambda}^f|$. Then it is clear that the collection $\{I_{\lambda}^f\colon \lambda\leq |X| ~\text{such that}~ I_{\lambda}^f\neq \varnothing\}$ is a partition of $I$.

\begin{proposition}\label{d-green-alter}
Let $f,g\in \mathcal{B}(X,\mathcal{P})$. Then $(f,g)\in \mathcal{D}$ in $\mathcal{B}(X,\mathcal{P})$ if and only if $n_{\lambda}^f=n_{\lambda}^g$ for all cardinals $\lambda$.
\end{proposition}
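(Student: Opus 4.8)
The plan is to reduce everything to the characterization of $\mathcal{D}$ furnished by Theorem \ref{b-d-green}, namely that $(f,g)\in\mathcal{D}$ if and only if there is some $\alpha\in\Sym(I)$ with $|X_if|=|X_{i\alpha}g|$ for all $i\in I$. The guiding observation is that such an $\alpha$ is precisely a bijection of $I$ that carries each index-set $I_\lambda^f$ onto $I_\lambda^g$; thus the existence of $\alpha$ is exactly the statement that the two families $\{I_\lambda^f\}$ and $\{I_\lambda^g\}$ can be matched block by block, which happens if and only if $n_\lambda^f=n_\lambda^g$ for every cardinal $\lambda$. I would therefore prove both implications by translating between the permutation $\alpha$ and these block-matchings.

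For the forward direction I would take an $\alpha\in\Sym(I)$ supplied by Theorem \ref{b-d-green} and fix a cardinal $\lambda$. For any $i\in I$ the equality $|X_if|=|X_{i\alpha}g|$ gives $i\in I_\lambda^f\iff i\alpha\in I_\lambda^g$, so $\alpha$ maps $I_\lambda^f$ into $I_\lambda^g$. Injectivity is inherited from $\alpha$, and surjectivity follows because any $j\in I_\lambda^g$ may be written $j=i\alpha$ (as $\alpha$ is a bijection of $I$) with $i\in I_\lambda^f$. Hence $\alpha$ restricts to a bijection $I_\lambda^f\to I_\lambda^g$, and consequently $n_\lambda^f=|I_\lambda^f|=|I_\lambda^g|=n_\lambda^g$.

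For the converse I would paste bijections together. Assuming $n_\lambda^f=n_\lambda^g$ for all $\lambda$, the cardinal $\lambda$ satisfies $I_\lambda^f\neq\varnothing$ if and only if $I_\lambda^g\neq\varnothing$; for each such $\lambda$ choose a bijection $\alpha_\lambda\colon I_\lambda^f\to I_\lambda^g$, which exists since these sets have the common cardinality $n_\lambda^f$. Using that $\{I_\lambda^f\}$ and $\{I_\lambda^g\}$ are both partitions of $I$ indexed by the same set of cardinals, define $\alpha\colon I\to I$ by $i\alpha=i\alpha_\lambda$ for the unique $\lambda$ with $i\in I_\lambda^f$. Then $\alpha$ is a bijection of $I$, and by construction $i\in I_\lambda^f$ forces $i\alpha\in I_\lambda^g$, so $|X_if|=\lambda=|X_{i\alpha}g|$ for all $i\in I$; Theorem \ref{b-d-green} then yields $(f,g)\in\mathcal{D}$.

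I expect the only delicate point to be verifying that the pasted map $\alpha$ is a genuine permutation of $I$ rather than merely a family of partial bijections. This rests on the two facts that $\{I_\lambda^f\colon I_\lambda^f\neq\varnothing\}$ and $\{I_\lambda^g\colon I_\lambda^g\neq\varnothing\}$ are partitions of $I$ (already recorded just before the proposition) and that the hypothesis $n_\lambda^f=n_\lambda^g$ forces these partitions to share the same indexing set of cardinals, so that every block on the domain side is matched with a block of equal size on the codomain side and the $\alpha_\lambda$ assemble into a single bijection. Everything else is a routine check.
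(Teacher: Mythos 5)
Your proposal is correct and follows essentially the same route as the paper's proof: both directions reduce to Theorem \ref{b-d-green}, with the forward direction showing that $\alpha$ restricts to a bijection $I_\lambda^f\to I_\lambda^g$ (the paper phrases this as the two inequalities $n_\lambda^f\leq n_\lambda^g$ and $n_\lambda^f\geq n_\lambda^g$ via injectivity of $\alpha$ and $\alpha^{-1}$, which is the same argument), and the converse pasting the block bijections $\alpha_\lambda$ into a single permutation of $I$ exactly as the paper does.
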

\begin{proof}[\textbf{Proof}]
Suppose that $(f,g)\in \mathcal{D}$ in $\mathcal{B}(X,\mathcal{P})$. Then by Theorem \ref{b-d-green}, there exists $\alpha\in \Sym(I)$ such that $|X_if|=|X_{i\alpha}g|$ for all $i\in I$. To prove the desired result, let $\lambda$ be a cardinal. It is clear that $n_{\lambda}^f=0$ if and only if $n_{\lambda}^g= 0$, since $|X_if|\neq \lambda$ for all $i\in I$ if and only if $|X_ig|\neq \lambda$ for all $i\in I$. Assume that $n_{\lambda}^f \ge 1$, and let $i\in I_{\lambda}^f$. Then $|X_{i\alpha}g|=|X_if|=\lambda$, so $i\alpha\in I_{\lambda}^g$. Since $\alpha$ is injective, we thus get $n_{\lambda}^f\leq n_{\lambda}^g$. Now, let $i\in I_{\lambda}^g$. Then $|X_{i\alpha^{-1}}f|=|X_{(i\alpha^{-1})\alpha}g|=|X_ig| =\lambda$, so $i\alpha^{-1}\in I_{\lambda}^f$. Since $\alpha^{-1}$ is injective, we thus get $n_{\lambda}^f\geq n_{\lambda}^g$. Hence $n_{\lambda}^f=n_{\lambda}^g$ as required.

\vspace{0.7mm}	
Conversely, suppose that the given condition holds. In view of Theorem \ref{b-d-green}, in order to prove $(f,g)\in \mathcal{D}$ in $\mathcal{B}(X,\mathcal{P})$, it suffices to construct $\alpha\in \Sym(I)$ such that $|X_if|=|X_{i\alpha}g|$ for all $i\in I$. For this, consider a cardinal $\lambda$. Note by hypothesis that $I_{\lambda}^f=\varnothing$ if and only if $I_{\lambda}^g=\varnothing$. If $I_{\lambda}^f\neq \varnothing$, then $|I_{\lambda}^f|=|I_{\lambda}^g|$  by hypothesis. Therefore there exists a bijection $\alpha_{\lambda}\colon I_{\lambda}^f\to I_{\lambda}^g$. Define $\alpha\in T(I)$ by $i\alpha=i\alpha_{\lambda}$ whenever $i\in I_{\lambda}^f$. Clearly $\alpha$ is well-defined, since $\lambda$ is arbitrary. Also, since every $\alpha_{\lambda}$ is bijective and $\displaystyle\bigcup_{\lambda} I_{\lambda}^f=I=\displaystyle\bigcup_{\lambda} I_{\lambda}^g$, we see that $\alpha\in \Sym(I)$. It is also routine to verify that $|X_if|=|X_{i\alpha}g|$ for all $i\in I$. Hence $(f,g)\in \mathcal{D}$ by Theorem \ref{b-d-green}.	
\end{proof}

In the following theorem, we give a sufficient condition for $D_f=J_f$ in $\mathcal{B}(X,\mathcal{P})$.

\begin{theorem}\label{two-consecutative}
Let $f\in \mathcal{B}(X,\mathcal{P})$. If there exists two consecutive cardinals $\lambda_1,\lambda_2$ such that $n_{\lambda}^f=0$ for all cardinals $\lambda\notin \{\lambda_1,\lambda_2\}$, then $D_f=J_f$ in $\mathcal{B}(X,\mathcal{P})$.	
\end{theorem}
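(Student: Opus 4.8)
The plan is to prove the two inclusions $D_f \subseteq J_f$ and $J_f \subseteq D_f$ separately. The first is automatic, since $\mathcal{D} \subseteq \mathcal{J}$ holds in any semigroup, so all the content lies in showing $J_f \subseteq D_f$. Thus I would fix an arbitrary $g \in \mathcal{B}(X,\mathcal{P})$ with $(f,g) \in \mathcal{J}$ and aim to prove $(f,g) \in \mathcal{D}$. Rather than invoke Theorem \ref{b-d-green} directly, I would use the cardinal reformulation of Proposition \ref{d-green-alter}: it suffices to show $n_\lambda^f = n_\lambda^g$ for every cardinal $\lambda$. By hypothesis $n_\lambda^f = 0$ unless $\lambda \in \{\lambda_1,\lambda_2\}$, so every block-image size $|X_i f|$ lies in the two-element set $\{\lambda_1,\lambda_2\}$, and the only work is to match the counts at $\lambda_1$ and $\lambda_2$ while ruling out any other size occurring for $g$.

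First I would unpack the hypothesis $(f,g) \in \mathcal{J}$ via Theorem \ref{b-j-green}, obtaining permutations $\alpha,\beta \in \Sym(I)$ with $|X_i f| \le |X_{i\alpha} g|$ and $|X_i g| \le |X_{i\beta} f|$ for all $i \in I$. The first step is to confine the sizes of $g$. From the second inequality, $|X_i g| \le |X_{i\beta} f| \le \lambda_2$ for every $i$, so no block of $g$ has image size exceeding $\lambda_2$. From the first inequality, writing any $j \in I$ as $j = i\alpha$ gives $|X_j g| \ge |X_{j\alpha^{-1}} f| \ge \lambda_1$, so no block of $g$ has image size below $\lambda_1$. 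Since $\lambda_1,\lambda_2$ are consecutive, there is no cardinal strictly between them, whence $|X_j g| \in \{\lambda_1,\lambda_2\}$ for all $j$; in particular $n_\mu^g = 0$ for every $\mu \notin \{\lambda_1,\lambda_2\}$, matching $n_\mu^f = 0$.

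It then remains to prove $n_{\lambda_1}^f = n_{\lambda_1}^g$ and $n_{\lambda_2}^f = n_{\lambda_2}^g$, and here I would avoid cardinal subtraction (which fails in the infinite case) and instead exhibit explicit injections in both directions, concluding by the Cantor--Schr\"oder--Bernstein theorem. The key observation is that a block of the larger size can only be sent to a block of the larger size: if $i \in I_{\lambda_2}^f$ then $|X_{i\alpha} g| \ge \lambda_2$ forces $i\alpha \in I_{\lambda_2}^g$, so $\alpha$ restricts to an injection $I_{\lambda_2}^f \to I_{\lambda_2}^g$; symmetrically $\beta$ restricts to an injection $I_{\lambda_2}^g \to I_{\lambda_2}^f$, giving $n_{\lambda_2}^f = n_{\lambda_2}^g$. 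For the smaller size I would use the contrapositives of these same containments: if $j \in I_{\lambda_1}^g$ then $j\alpha^{-1}$ cannot lie in $I_{\lambda_2}^f$ (else $j \in I_{\lambda_2}^g$), so $\alpha^{-1}$ restricts to an injection $I_{\lambda_1}^g \to I_{\lambda_1}^f$; likewise $\beta^{-1}$ restricts to an injection $I_{\lambda_1}^f \to I_{\lambda_1}^g$. Cantor--Schr\"oder--Bernstein then yields $n_{\lambda_1}^f = n_{\lambda_1}^g$. With all counts matched, Proposition \ref{d-green-alter} gives $(f,g) \in \mathcal{D}$, completing the proof.

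The step I expect to be the genuine obstacle is the equality of the counts when every relevant quantity is an infinite cardinal equal to $|I|$: there the relation $n_{\lambda_1}^f + n_{\lambda_2}^f = |I| = n_{\lambda_1}^g + n_{\lambda_2}^g$ together with $n_{\lambda_2}^f = n_{\lambda_2}^g$ carries no information about the $\lambda_1$-counts. The point of passing to the permutations $\alpha,\beta$ and their inverses, rather than arguing purely with cardinal arithmetic, is precisely to bypass this: the monotonicity of the inequalities under $\alpha$ and $\beta$ forces level-preservation on the larger size, and taking inverses transfers this to the smaller size. The consecutiveness hypothesis enters only to guarantee that the squeeze $\lambda_1 \le |X_j g| \le \lambda_2$ leaves no room for an intermediate third value.
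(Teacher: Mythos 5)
Your proposal is correct and follows essentially the same route as the paper's own proof: both unpack $(f,g)\in\mathcal{J}$ via Theorem \ref{b-j-green}, use consecutiveness to confine the image sizes of $g$ to $\{\lambda_1,\lambda_2\}$, and then show that $\alpha,\beta$ restrict to injections between the $\lambda_2$-level sets while $\alpha^{-1},\beta^{-1}$ restrict to injections between the $\lambda_1$-level sets, concluding with Proposition \ref{d-green-alter}. The only differences are presentational: you avoid the paper's case splits on empty versus nonempty level sets and name Cantor--Schr\"oder--Bernstein explicitly where the paper just writes the two cardinal inequalities and concludes equality.
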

\begin{proof}[\textbf{Proof}]
Clearly $D_f\subseteq J_f$, since $\mathcal{D}\subseteq \mathcal{J}$. For the reverse inclusion, let $g\in J_f$. Then by Theorem \ref{b-l-green}, there exist $\alpha,\beta\in \Sym(I)$ such that for all $i\in I$,
\begin{eqnarray*}
|X_if|\leq |X_{i\alpha}g|\hspace{1cm} \cdots \text{(i)} \qquad\text{and}\qquad |X_ig|\leq |X_{i\beta}f| \hspace{1cm} \cdots \text{(ii)}
\end{eqnarray*}

First, we claim that $n_{\lambda}^g=0$ for all cardinals $\lambda\notin \{\lambda_1,\lambda_2\}$. Suppose to the contrary that there exists $\lambda\notin \{\lambda_1,\lambda_2\}$ such that $n_{\lambda}^g\neq 0$. Then $|X_ig|=\lambda$ for some $i\in I$. Consider the following two possible cases:

\vspace{0.7mm}
\noindent Case 1: Suppose $\lambda<\lambda_1$. By inequality \rm(i), we obtain $|X_{i\alpha^{-1}}f|\leq |X_{(i\alpha^{-1})\alpha}g|=|X_ig|=\lambda$. This yields $|X_{i\alpha^{-1}}f|<\lambda_1$, a contradiction of our hypothesis.

\vspace{0.7mm}
\noindent Case 2: Suppose $\lambda>\lambda_2$. By inequality \rm(ii), we obtain
$\lambda=|X_ig|\leq |X_{i\beta}f|$. This yields $|X_{i\beta}f|>\lambda_2$, a contradiction of our hypothesis.

\vspace{1mm}
In either case, we get a contradiction. Hence $n_{\lambda}^g=0$ for all cardinals $\lambda\notin \{\lambda_1,\lambda_2\}$, and thus $n_{\lambda}^f=0=n_{\lambda}^g$ for all cardinals $\lambda\notin \{\lambda_1,\lambda_2\}$.

\vspace{1mm}
Next, we prove that $n_{\lambda_1}^f=n_{\lambda_1}^g$. Consider the following two possible cases:

\vspace{0.5mm}
\noindent Case 1: Suppose $I_{\lambda_1}^f=\varnothing$. Suppose to the contrary that $I_{\lambda_1}^g\neq \varnothing$, and let $i\in I_{\lambda_1}^g$. Then $|X_ig|=\lambda_1$, and so $|X_{i\alpha^{-1}}f|\leq |X_{(i\alpha^{-1})\alpha}g|=|X_ig|=\lambda_1$ by inequality \rm(i). This yields $|X_{i\alpha^{-1}}f|=\lambda_1$ by hypothesis, and so $i\alpha^{-1}\in I_{\lambda_1}^f$. This is a contradiction, because $I_{\lambda_1}^f=\varnothing$. Hence $I_{\lambda_1}^g=\varnothing$, and thus $n_{\lambda_1}^f=n_{\lambda_1}^g$.

\vspace{1mm}
\noindent Case 2: Suppose $I_{\lambda_1}^f\neq \varnothing$. Let $i\in I_{\lambda_1}^f$. Then $|X_if|=\lambda_1$, and so $|X_{i\beta^{-1}}g|\leq |X_{(i\beta^{-1})\beta}f|=|X_if|=\lambda_1$ by inequality \rm(ii). Since $n_{\lambda}^g=0$ for all cardinals $\lambda\notin \{\lambda_1,\lambda_2\}$, it follows that $|X_{i\beta^{-1}}g|=\lambda_1$ whence $i\beta^{-1}\in I_{\lambda_1}^g$. Since $\beta^{-1}$ is injective, we thus get $n_{\lambda_1}^f\leq n_{\lambda_1}^g$.
Similarly, we can show that $i\alpha^{-1}\in I_{\lambda_1}^f$ for every $i\in I_{\lambda_1}^g$, and further prove that $n_{\lambda_1}^g\leq n_{\lambda_1}^f$ by using the fact that $\alpha^{-1}$ is injective.  Thus $n_{\lambda_1}^f=n_{\lambda_1}^g$.

\vspace{0.5mm}
In either case, we have $n_{\lambda_1}^f=n_{\lambda_1}^g$.

\vspace{0.5mm}
Next, we prove that $n_{\lambda_2}^f=n_{\lambda_2}^g$. Consider the following two possible cases:

\vspace{0.5mm}
\noindent Case 1: Suppose $I_{\lambda_2}^f=\varnothing$. Suppose to the contrary that $I_{\lambda_2}^g\neq \varnothing$, and let $i\in I_{\lambda_2}^g$. Then $|X_ig|=\lambda_2$, and so $\lambda_2=|X_ig|\leq |X_{i\beta}f|$ by inequality \rm(ii). This yields $|X_{i\beta}f|=\lambda_1$ by hypothesis, and so $i\beta\in I_{\lambda_2}^f$. This is a contradiction, because $I_{\lambda_2}^f=\varnothing$. Hence $I_{\lambda_2}^g=\varnothing$, and thus $n_{\lambda_2}^f=n_{\lambda_2}^g$.

\vspace{0.5mm}
\noindent Case 2: Suppose $I_{\lambda_2}^f\neq \varnothing$. Let $i\in I_{\lambda_2}^f$. Then $|X_if|=\lambda_2$, and so $\lambda_2=|X_if|\leq |X_{i\alpha}g|$ by inequality \rm(i). Since $n_{\lambda}^g=0$ for all cardinals $\lambda\notin \{\lambda_1,\lambda_2\}$, it follows that $|X_{i\alpha}g|=\lambda_2$ whence $i\alpha\in I_{\lambda_2}^g$. Since $\alpha$ is injective, we thus get $n_{\lambda_2}^f\leq n_{\lambda_2}^g$. Similarly, we can show that $i\beta\in I_{\lambda_2}^f$ for every $i\in I_{\lambda_2}^g$, and further prove that $n_{\lambda_2}^g\leq n_{\lambda_2}^f$ by using the fact that $\beta$ is injective.  Thus $n_{\lambda_2}^f=n_{\lambda_2}^g$.

\vspace{0.5mm}
In either case, we have $n_{\lambda_2}^f=n_{\lambda_2}^g$. Thus, since $n_{\lambda}^f=n_{\lambda}^g$ for all cardinals $\lambda$, we conclude from Proposition \ref{d-green-alter} that $(f,g)\in \mathcal{D}$ in $\mathcal{B}(X,\mathcal{P})$. Hence $g\in D_f$ as required.
\end{proof}

As an immediate consequence of Theorem \ref{two-consecutative}, we get

\begin{corollary}\label{lambda-1-2}
Let $f\in \mathcal{B}(X,\mathcal{P})$. If $n_{\lambda}^f=0$ for all cardinals $\lambda\geq 3$, then $D_f=J_f$ in $\mathcal{B}(X,\mathcal{P})$.	
\end{corollary}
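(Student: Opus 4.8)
The plan is to derive this directly from Theorem \ref{two-consecutative} by exhibiting the required pair of consecutive cardinals; the only content beyond the hypothesis is to pin down which values $|X_if|$ can actually take.

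First I would record that for every $i\in I$ the block $X_i$ is nonempty, being a block of the partition $\mathcal{P}$, and hence its image $X_if$ is nonempty. Thus $|X_if|\geq 1$ for all $i\in I$, which gives $I_0^f=\varnothing$ and so $n_0^f=0$. Combining this with the hypothesis that $n_{\lambda}^f=0$ for every cardinal $\lambda\geq 3$, I conclude that the only cardinals $\lambda$ with $n_{\lambda}^f\neq 0$ lie in $\{1,2\}$.

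Next I would take $\lambda_1=1$ and $\lambda_2=2$. These are two consecutive cardinals, and by the preceding paragraph we have $n_{\lambda}^f=0$ for every cardinal $\lambda\notin\{\lambda_1,\lambda_2\}$. Hence the hypothesis of Theorem \ref{two-consecutative} is met with this choice of $\lambda_1,\lambda_2$, and that theorem immediately yields $D_f=J_f$ in $\mathcal{B}(X,\mathcal{P})$.

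There is essentially no obstacle here, as the statement is a direct specialization of Theorem \ref{two-consecutative}. The only point requiring a moment's care is to exclude the degenerate value $\lambda=0$ so that $1$ and $2$ really do capture the full support of $\lambda\mapsto n_{\lambda}^f$; this is precisely why I would make the nonemptiness of each $X_if$ explicit rather than leaving it tacit.
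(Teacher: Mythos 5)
Your proof is correct and takes essentially the same route as the paper, which states the corollary as an immediate consequence of Theorem \ref{two-consecutative}; your argument is exactly that specialization, taking the consecutive cardinals $\lambda_1=1$ and $\lambda_2=2$. Your explicit check that $n_0^f=0$ (because each block $X_i$, and hence each image $X_if$, is nonempty) is a small detail the paper leaves tacit, and including it is a legitimate tightening rather than a deviation.
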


In the following theorem, we give a necessary and sufficient condition for $\mathcal{D}=\mathcal{J}$ on $\mathcal{B}(X,\mathcal{P})$.

\begin{theorem}\label{D=J-BXP-size-3-finite}
Let $\mathcal{P} = \{X_i \colon i\in I\}$ be a partition of $X$. Then $\mathcal{D}=\mathcal{J}$ on $\mathcal{B}(X,\mathcal{P})$ if and only if $J=\{i\in I \colon |X_i|\geq 3\}$ is finite.
\end{theorem}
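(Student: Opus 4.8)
The plan is to prove the two implications separately, using the alternative description of $\mathcal{D}$ from Proposition \ref{d-green-alter} as the main engine. The forward implication will be immediate: the contrapositive of Proposition \ref{p3-infinite} says that if $J$ is infinite then $\mathcal{D}\neq\mathcal{J}$, so $\mathcal{D}=\mathcal{J}$ forces $J$ to be finite. All the genuine work is in the converse, where I assume $J=\{i\in I\colon |X_i|\geq 3\}$ is finite and must establish $\mathcal{J}\subseteq\mathcal{D}$ (the inclusion $\mathcal{D}\subseteq\mathcal{J}$ being automatic). So I would fix $(f,g)\in\mathcal{J}$, take $\alpha,\beta\in\Sym(I)$ as in Theorem \ref{b-j-green} with $|X_if|\leq|X_{i\alpha}g|$ and $|X_ig|\leq|X_{i\beta}f|$ for all $i\in I$, and reduce, via Proposition \ref{d-green-alter}, to showing $n_\lambda^f=n_\lambda^g$ for every cardinal $\lambda$.

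For a cardinal $\mu$ I will write $I_{\geq\mu}^f=\{i\in I\colon |X_if|\geq\mu\}$, and similarly for $g$. The crucial observation is that $|X_if|\geq 3$ forces $|X_i|\geq 3$, so $I_{\geq\mu}^f\subseteq J$ is \emph{finite} whenever $\mu\geq 3$, and likewise for $g$. The first step is then to show that $\alpha$ restricts to a bijection $I_{\geq\mu}^f\to I_{\geq\mu}^g$ for each $\mu\geq 3$: since $|X_if|\geq\mu$ gives $|X_{i\alpha}g|\geq|X_if|\geq\mu$, the map $\alpha$ sends $I_{\geq\mu}^f$ injectively into $I_{\geq\mu}^g$, and symmetrically $\beta$ sends $I_{\geq\mu}^g$ injectively into $I_{\geq\mu}^f$; as both are finite, comparing sizes yields $|I_{\geq\mu}^f|=|I_{\geq\mu}^g|$, and an injection between finite sets of equal cardinality is onto, so $\alpha(I_{\geq\mu}^f)=I_{\geq\mu}^g$.

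The second step handles every $\lambda\geq 2$ uniformly. Writing $\lambda^{+}$ for the successor cardinal of $\lambda$, I note $|X_if|>\lambda$ is equivalent to $|X_if|\geq\lambda^{+}$, and since $\lambda^{+}\geq 3$ the first step gives $\alpha(I_{\geq\lambda^{+}}^f)=I_{\geq\lambda^{+}}^g$. Now for $i\in I_\lambda^f$ one has $i\alpha\in I_{\geq\lambda}^g$ while $i\alpha\notin I_{\geq\lambda^{+}}^g$ (because $\alpha$ is injective and $i\notin I_{\geq\lambda^{+}}^f$), so $i\alpha\in I_\lambda^g$; thus $\alpha$ injects $I_\lambda^f$ into $I_\lambda^g$, giving $n_\lambda^f\leq n_\lambda^g$, with the reverse inequality coming from $\beta$. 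The remaining case $\lambda=1$ is settled by a minimal-value argument: for $j\in I_1^f$ the index $j\beta^{-1}$ satisfies $|X_{j\beta^{-1}}g|\leq|X_jf|=1$, hence lies in $I_1^g$, so $\beta^{-1}$ injects $I_1^f$ into $I_1^g$ and $n_1^f\leq n_1^g$; the symmetric use of $\alpha^{-1}$ gives $n_1^g\leq n_1^f$. Assembling all cardinals produces $n_\lambda^f=n_\lambda^g$ for every $\lambda$, and Proposition \ref{d-green-alter} then yields $(f,g)\in\mathcal{D}$.

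I expect the main obstacle to be that the sizes $|X_if|$ may be \emph{infinite} cardinals, so one cannot recover $n_\lambda^f$ from the numbers $|I_{\geq\mu}^f|$ by naive cardinal subtraction (indeed $\aleph_0-\aleph_0$ is meaningless, which is exactly what breaks the argument when $J$ is infinite, as the Example shows). The device that circumvents this is to exploit, for $\mu\geq 3$, not merely the equality $|I_{\geq\mu}^f|=|I_{\geq\mu}^g|$ but the stronger fact that $\alpha$ maps $I_{\geq\mu}^f$ \emph{onto} $I_{\geq\mu}^g$ — available precisely because these sets are finite — so that the set-difference $I_\lambda^f=I_{\geq\lambda}^f\setminus I_{\geq\lambda^{+}}^f$ is transported faithfully by the injection $\alpha$. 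The single cardinal this peeling does not reach is $\lambda=1$, where $I_{\geq 2}^f$ can be infinite and the ``onto'' conclusion fails; this is why the separate minimal-value argument with $\alpha^{-1},\beta^{-1}$ is needed there.
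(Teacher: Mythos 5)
Your proof is correct, and while it rests on the same two pillars as the paper's proof --- Theorem \ref{b-j-green} plus Proposition \ref{d-green-alter}, and the observation that $|X_if|\geq 3$ forces $i\in J$ so the relevant index sets are finite and two-way injections between them are bijections --- your bookkeeping is genuinely different and tighter. The paper first disposes of two special cases (finite $\mathcal{P}$ via Proposition \ref{p-finite}, and the case $n_\lambda^f=0$ for all $\lambda\geq 3$ via Corollary \ref{lambda-1-2}), then enumerates the distinct block-image cardinalities $\lambda_1>\cdots>\lambda_k$ of $f$ and $\mu_1>\cdots>\mu_t$ of $g$ that are at least $3$, runs a descending induction proving $\lambda_j=\mu_j$ and $n_{\lambda_j}^f=n_{\lambda_j}^g$ level by level (each step using the bijectivity of the restrictions established at the previous levels), then proves $k=t$ by contradiction, and finally treats $\lambda=2$ and $\lambda=1$ as separate cases. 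You replace the entire inductive scaffolding with the cumulative sets $I_{\geq\mu}^f$: the bijectivity statement $\alpha(I_{\geq\mu}^f)=I_{\geq\mu}^g$ holds simultaneously for every $\mu\geq 3$ with a one-line argument, and the exact levels are recovered as $I_\lambda^f=I_{\geq\lambda}^f\setminus I_{\geq\lambda^{+}}^f$ via successor cardinals, which handles $\lambda=2$ on the same footing as $\lambda\geq 3$ (since $2^{+}=3$) and eliminates the need to compare the lengths $k$ and $t$ or to order the cardinalities at all. Only $\lambda=1$ needs special treatment, and there your minimal-value argument with $\alpha^{-1},\beta^{-1}$ is exactly the paper's. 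What your approach buys is uniformity and brevity; what the paper's buys is that it stays entirely inside elementary counting of exact levels, never invoking successor cardinals (though these are unproblematic in ZFC, where the paper already works).
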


\begin{proof}[\textbf{Proof}]
Suppose that $\mathcal{D}=\mathcal{J}$ on $\mathcal{B}(X,\mathcal{P})$. If $\mathcal{P}$ is finite, then there is nothing to prove. Assume that $\mathcal{P}$ is infinite. Suppose to the contrary that $J=\{i\in I \colon |X_i|\geq 3\}$ is infinite. Then $\mathcal{D}\neq \mathcal{J}$ on $\mathcal{B}(X,\mathcal{P})$ by Proposition \ref{p3-infinite}, a contradiction. Hence $J=\{i\in I \colon |X_i|\geq 3\}$ is finite.

\vspace{0.1cm}	
Conversely, suppose that $J=\{i\in I \colon |X_i|\geq 3\}$ is finite. If $\mathcal{P}$ is finite, then $\mathcal{D}=\mathcal{J}$ on $\mathcal{B}(X,\mathcal{P})$ by Proposition \ref{p-finite}. Assume that $\mathcal{P}$ is infinite. In general, we have $\mathcal{D}\subseteq \mathcal{J}$. For the reverse inclusion, let $(f,g)\in \mathcal{J}$ in $\mathcal{B}(X,\mathcal{P})$. Then by Theorem \ref{b-j-green}, there exist $\alpha,\beta\in \Sym(I)$ such that for all $i\in I$,
\begin{eqnarray*}
|X_if|\leq |X_{i\alpha}g|\hspace{1cm} \cdots \text{(i)} \qquad\text{and}\qquad |X_ig|\leq |X_{i\beta}f| \hspace{1cm} \cdots \text{(ii)}
\end{eqnarray*}

In view of Proposition \ref{d-green-alter}, in order to show that $(f,g)\in \mathcal{D}$ in $\mathcal{B}(X,\mathcal{P})$, it suffices to prove that $n_{\lambda}^f=n_{\lambda}^g$ for all cardinals $\lambda$. Consider the following two possible cases:

\vspace{1mm}
\noindent Case 1: Suppose $n_{\lambda}^f=0$ for all $\lambda\geq 3$. Then $(f,g)\in \mathcal{D}$ by Corollary \ref{lambda-1-2}.

\vspace{1mm}
\noindent Case 2: Suppose $n_{\lambda}^f\neq 0$ for some cardinal $\lambda\geq 3$. Let $i\in I_{\lambda}^f$. Then $|X_if|=\lambda$, and so $3\leq |X_{i\alpha}g|$ by inequality \rm(i). Therefore $n_{\lambda}^g\neq 0$ for some cardinal $\lambda\geq 3$.

\vspace{0.5mm}
Let $\lambda_1,\ldots, \lambda_k$, where $\lambda_1>\cdots >\lambda_k$, be the cardinalities of blocks in $\mathcal{P}_f$ of cardinalities at least three; let $\mu_1,\ldots, \mu_t$, where $\mu_1>\cdots >\mu_t$, be the cardinalities of blocks in $\mathcal{P}_g$ of cardinalities at least three. Note by hypothesis that both $k$ and $t$ are finite.

\vspace{0.5mm}	
First, we prove that $\lambda_1=\mu_1$ and subsequently $n_{\lambda_1}^f=n_{\mu_1}^g$. For this, let $i\in I_{\lambda_1}^f$. Then by inequality \rm(i), we have $\lambda_1=|X_if|\leq |X_{i\alpha}g|\leq \mu_1$. Now, let $i\in I_{\mu_1}^g$. Then by inequality \rm(ii), we have $\mu_1=|X_ig|\leq |X_{i\beta}f|\leq \lambda_1$. Thus $\lambda_1=\mu_1$. To prove $n_{\lambda_1}^f=n_{\mu_1}^g$, we first observe for every $i\in I_{\lambda_1}^f$ that $|X_{i\alpha}g|=\lambda_1$, whence $i\alpha\in I_{\lambda_1}^g$. Therefore, since $\alpha$ is injective, we get $n_{\lambda_1}^f\leq n_{\lambda_1}^g$. Similarly, by using the facts that
$\beta$ is injective and $i\beta\in I_{\lambda_1}^f$ for every $i\in I_{\lambda_1}^g$, we get $n_{\lambda_1}^g\leq n_{\lambda_1}^f$. Thus $n_{\lambda_1}^f=n_{\lambda_1}^g$. Moreover, recall by hypothesis that both $I_{\lambda_1}^f$ and $I_{\lambda_1}^g$ are finite. Therefore, since $\alpha, \beta$ are injective, both maps $\alpha_{\lambda_1}\colon I_{\lambda_1}^f\to I_{\lambda_1}^g$ and $\beta_{\lambda_1}\colon I_{\lambda_1}^g\to I_{\lambda_1}^f$ defined by $i\alpha_{\lambda_1}=i\alpha$ and $i\beta_{\lambda_1}=i\beta$, respectively, are bijective.

\vspace{0.5mm}
Next, we prove that $\lambda_2=\mu_2$ and subsequently $n_{\lambda_2}^f=n_{\mu_2}^g$. For this, let $i\in I_{\lambda_2}^f$. Recall that $\alpha_{\lambda_1}\colon I_{\lambda_1}^f\to I_{\lambda_1}^g$ is bijective, and so $i\alpha\notin I_{\lambda_1}^g$. Therefore $|X_{i\alpha}g|\leq \mu_2$, which gives $\lambda_2=|X_if|\leq |X_{i\alpha}g|\leq \mu_2$ by inequality \rm(i). Now, let $i\in I_{\mu_2}^g$. Recall that $\beta_{\lambda_1}\colon I_{\lambda_1}^g\to I_{\lambda_1}^f$ is bijective, and so $i\beta\notin I_{\lambda_1}^f$. Therefore $|X_{i\alpha}f|\leq \lambda_2$, which gives $\mu_2=|X_ig|\leq |X_{i\beta}f|\leq \lambda_2$ by inequality \rm(ii). Thus $\lambda_2=\mu_2$.  To prove $n_{\lambda_2}^f=n_{\mu_2}^g$, we first observe for every $i\in I_{\lambda_2}^f$ that $|X_{i\alpha}g|=\lambda_2$, whence $i\alpha\in I_{\lambda_2}^g$. Therefore, since $\alpha$ is injective, we get $n_{\lambda_2}^f\leq n_{\lambda_2}^g$. Similarly, by using the facts that $\beta$ is injective and $i\beta\in I_{\lambda_2}^f$ for every $i\in I_{\lambda_2}^g$, we get $n_{\lambda_2}^g\leq n_{\lambda_2}^f$. Thus $n_{\lambda_2}^f=n_{\lambda_2}^g$. Moreover, recall by hypothesis that both $I_{\lambda_2}^f$ and $I_{\lambda_2}^g$ are finite. Therefore, since $\alpha, \beta$ are injective, both maps $\alpha_{\lambda_2}\colon I_{\lambda_2}^f\to I_{\lambda_2}^g$ and $\beta_{\lambda_2}\colon I_{\lambda_2}^g\to I_{\lambda_2}^f$ defined by $i\alpha_{\lambda_2}=i\alpha$ and $i\beta_{\lambda_2}=i\beta$, respectively, are bijective.

\vspace{0.5mm}
We can prove in similar way that $\lambda_j=\mu_j$ and $n_{\lambda_j}^f=n_{\mu_j}^g$ for all $j=3,\ldots,\min\{k,t\}$. Moreover, we observe for every $j=3,\ldots ,\min\{k,t\}$ that both maps $\alpha_{\lambda_j}\colon I_{\lambda_j}^f\to I_{\lambda_j}^g$ and $\beta_{\lambda_j}\colon I_{\lambda_j}^g\to I_{\lambda_j}^f$ defined by $i\alpha_{\lambda_j}=i\alpha$ and $i\beta_{\lambda_j}=i\beta$, respectively, are bijective.

\vspace{1.0mm}
We now claim that $k = t$. Suppose to the contrary that $k\neq t$. Assume without loss of generality that $k<t$. Then $\min\{k,t\}=k$, and so $\lambda_j=\mu_j$ for all $j \in [k]$. Now, let $i\in I_{\mu_t}^g$.
Then we see that $i\beta\notin I_{\lambda_j}^f$ for all $j \in [k]$, since  $\beta_{\lambda_j}\colon I_{\lambda_j}^g\to I_{\lambda_j}^f$ is bijective for all $j \in [k]$. Therefore $i\beta\in I_1^f\cup I_2^f$, and so $|X_{i\beta}f|\le 2$. Recall that $|X_ig|= \mu_t\geq 3$. By inequality \rm(ii), we therefore obtain $3\leq |X_ig|\leq |X_{i\beta}f|\le 2$, a contradiction. Hence $k=t$.

\vspace{0.5mm}
Next, we prove that $n_2^f=n_2^g$. Consider the following two possible cases:

\vspace{0.5mm}
\noindent Case I: Suppose $I_2^f=\varnothing$. Suppose to the contrary that $I_2^g\neq \varnothing$, and let $i\in I_2^g$. Then $|X_ig|=2$. Notice that $i\beta\notin I_{\lambda_j}^f$ for all $j \in [k]$, since $\beta_{\lambda_j}\colon I_{\lambda_j}^g\to I_{\lambda_j}^f$ is bijective for all $j \in [k]$. Therefore $i\beta\in I_1^f$, and so $|X_{i\beta}f|=1$. By inequality \rm(ii), we obtain $2=|X_ig|\leq |X_{i\beta}f|=1$, a contradiction. Hence $I_2^g= \varnothing$, and thus $n_2^f=n_2^g$.

\vspace{0.7mm}
\noindent Case II: Suppose $I_2^f\neq \varnothing$. Let $i\in I_2^f$. Then $|X_if|=2$, and so $2\leq |X_{i\alpha}g|$ by inequality \rm(i). Notice that $i\alpha\notin I_{\lambda_j}^g$ for all $j \in [k]$, since $\alpha_{\lambda_j} \colon I_{\lambda_j}^f\to I_{\lambda_j}^g$ is bijective for all $j \in [k]$. Therefore $i\alpha\in I_2^g$. Since $\alpha$ is injective, we thus get $n_2^f\leq n_2^g$. Now, let $i\in I_2^g$. Then $|X_ig|=2$, and so $2\leq |X_{i\beta}f|$  by inequality \rm(ii). Notice that $i\beta \notin I_{\lambda_j}^f$ for all $j \in [k]$, since $\beta_{\lambda_j} \colon I_{\lambda_j}^g\to I_{\lambda_j}^f$ is bijective for all $j \in [k]$. Therefore $i\beta\in I_2^f$. Since $\beta$ is injective, we thus get $n_2^f\geq n_2^g$. Hence $n_2^f=n_2^g$.

\vspace{1mm}	
In either case, we have $n_2^f=n_2^g$. Finally, we prove that $n_1^f=n_1^g$. Consider the following two possible cases:

\vspace{0.7mm}
\noindent Case I: Suppose $I_1^f= \varnothing$. Suppose to the contrary that $I_1^g\neq \varnothing$, and let $i\in I_1^g$. Then $|X_ig|=1$, and so $|X_{i\alpha^{-1}}f|\leq |X_{(i\alpha^{-1})\alpha}g|=|X_ig|$ by inequality \rm(i). It follows that $|X_{i\alpha^{-1}}f|=1$, and so $i\alpha^{-1}\in I_1^f$, a contradiction. Hence $I_1^g= \varnothing$, and thus $n_1^f=n_1^g$.

\vspace{0.7mm}
\noindent Case II: Suppose $I_1^f\neq \varnothing$. Let $i\in I_1^f$. Then $|X_if|=1$, and so $|X_{i\beta^{-1}}g|\leq |X_{(i\beta^{-1})\beta}f|=|X_if|$ by inequality \rm(ii). It follows that $|X_{i\beta^{-1}}g|=1$, and so $i\beta^{-1}\in I_1^g$. Since $\beta^{-1}$ is injective, we thus get $n_1^f\leq n_1^g$. Now, let $i\in I_1^g$. Then $|X_ig|=1$, and so $|X_{i\alpha^{-1}}f|\leq |X_{(i\alpha^{-1})\alpha}g|=|X_ig|$ by inequality \rm(i). It follows that $|X_{i\alpha^{-1}}f|=1$, and so $i\alpha^{-1}\in I_1^f$. Since $\alpha^{-1}$ is injective, we thus get $n_1^f\geq n_1^g$. Hence $n_1^f=n_1^g$.

\vspace{1mm}	
In either case, we have $n_1^f=n_1^g$. In addition, we note for every cardinal $\lambda \notin \{1,2,\lambda_k,\ldots,\lambda_1\}$ that $n_{\lambda}^f=0=n_{\lambda}^g$. Thus we conclude from Proposition \ref{d-green-alter} that $(f,g)\in \mathcal{D}$ in $\mathcal{B}(X,\mathcal{P})$ as required.
\end{proof}

Note that $\mathcal{D}=\mathcal{J}$ on $\mathcal{B}(X,\mathcal{P})$ if $\mathcal{P}$ is a trivial partition of $X$. In connection with Proposition \ref{p-finite}, Theorem \ref{two-consecutative}, and Theorem \ref{D=J-BXP-size-3-finite}, we end this section with the following conjecture.

\vspace{2mm}
\noindent\textbf{Conjecture.} Let $\mathcal{P}=\{X_i\colon i\in I\}$ be a partition of $X$ and $f\in \mathcal{B}(X,\mathcal{P})$. Then  $D_f=J_f$ in $\mathcal{B}(X,\mathcal{P})$ if and only if there exist any two consecutive cardinals $\lambda_1,\lambda_2$ and a finite subset $K$ of $I$ such that $|X_if|\in \{\lambda_1,\lambda_2\}$ for all $i\in I\setminus K$.


\end{document}